\documentclass[10pt]{amsart}
\usepackage{cmap,amsmath,amsthm,amssymb,amscd} 
\usepackage[utf8]{inputenc}
\usepackage[english]{babel}
\usepackage{mathrsfs} 
\usepackage{placeins}
\usepackage[T1]{fontenc}
\usepackage{csquotes}
\usepackage[style=gost-numeric,giveninits=true, doi=false,isbn=false,url=false,eprint=false, defernumbers=true, maxbibnames=99]{biblatex}
\usepackage{xcolor}
\usepackage{enumerate}
\usepackage{verbatim}
\usepackage{amsbsy}
\usepackage{graphics}
\usepackage[caption = false]{subfig}
\usepackage[final]{graphicx}
\usepackage{grffile}
\usepackage{mathtools}
\usepackage{tikz-cd}
\usetikzlibrary{decorations.pathmorphing}

\newtheorem*{theorem**}{Theorem\theoremnum}
\newenvironment{theorem*}[1][]{%
  \edef\theoremnum{\if\relax\detokenize{#1}\relax\else~#1\fi}% Store theorem number
  \begin{theorem**}
}{%
  \end{theorem**}
}

\newtheorem{theorem}{Theorem}[section]
\newtheorem{proposition}[theorem]{Proposition}
\newtheorem{corollary}[theorem]{Corollary}
\newtheorem{lemma}[theorem]{Lemma}

\newtheorem{definition}[theorem]{Definition}

\renewcommand{\phi}{\varphi}

\newcommand{\id}{\operatorname{id}}

\newcommand{\Grp}{\mathcal{G}\text{rp}}

\title{Lifting Functors and Relative Schur-Baer Theorems}
\author{Maxim~Ivanov}
\address{Sobolev Institute of Mathematics, 630090 Novosibirsk, Russia}
\email{m.ivanov2@g.nsu.ru}
\thanks{The work was performed according to the Government research assignment for IM SB RAS, project FWNF-2026-0031}

\numberwithin{equation}{section}
\begin{document}
\begin{abstract}
We introduce $\mathcal C$-lifting functors, which axiomatize
lifting properties of non-abelian tensor and exterior products
with respect to prescribed classes of group extensions. For a
homomorphism $f\colon\Gamma\to G$, we associate to every
$\mathcal C$-lifting functor $F$ a relative quotient $F_f(G)$.
This quotient maps epimorphically onto the subgroup determined by
$F$ in every $f$-extension belonging to $\mathcal C$. We show
that the construction is functorial in $f$ and that, for an
$f$-extension $p\colon\widetilde G\to G$, it gives a morphism of
natural exact sequences. In degree two this yields an
epimorphism
$
\frac{H_2(G;\mathbb Z)}{f_*H_2(\Gamma;\mathbb Z)}
\longrightarrow
\ker p\cap[\widetilde G,\widetilde G].
$
Applying the construction to iterated tensor and exterior powers,
we obtain relative Schur--Baer theorems for the lower central and
derived series. We also compare the relative tensor and exterior
squares, relate the exterior construction to the relative Schur
multiplier $H_2(G,\Gamma;\mathbb Z)$, and derive applications to
orderability. As a consequence, we prove that a virtual knot group
is left-orderable if and only if it is circularly orderable.
\end{abstract}
\maketitle
\tableofcontents

\section{Introduction}
\label{sec:introduction}

Schur's theorem states that if $G/Z(G)$ is finite, then the
commutator subgroup $[G,G]$ is finite \cite{Schur1904}. Baer
extended this result to the upper and lower central series: if
$G/Z_n(G)$ is finite, then $\gamma_{n+1}(G)$ is finite
\cite{Baer1952}. These results suggest two natural directions of
generalization. One may replace finiteness by another property of
groups, or replace the lower central series by another commutator
series. Both directions can be studied using non-abelian tensor
and exterior products. The reason is simple: these products come
with natural homomorphisms onto commutator subgroups, and these
homomorphisms lift along suitable extensions.

Let $F\colon\Grp\to\Grp$ be an endofunctor and let
\[
\eta\colon F\Longrightarrow\id
\]
be a natural transformation. Given a class $\mathcal C$ of
homomorphisms, we call $(F,\eta)$ a $\mathcal C$-lifting functor
if for every $p\colon\widetilde G\to G$ in $\mathcal C$ there is
a homomorphism
\[
S\colon F(G)\longrightarrow\widetilde G
\]
such that
\[
pS=\eta_G,
\qquad
SF(p)=\eta_{\widetilde G}.
\]
Iterated tensor and exterior powers are the main examples of such functors
\cite{BrownLoday1984,BrownLoday1987, DonadzeGarciaMartinez2021}.
The relation between Schur-type theorems and closure properties
of the non-abelian tensor product has also been studied in
recent work of Donadze, Ladra and P\'aez-Guill\'an
\cite{DonadzeLadraPaezGuillan2026}. The point of the present paper
is different: we fix a homomorphism $f$ and construct relative
quotients which simultaneously control all $f$-extensions in a
prescribed class.

Let $f\colon\Gamma\to G$ be a homomorphism. An $f$-extension of
$G$ is an extension $p\colon\widetilde G\to G$ for which $f$
admits a lifting to $\widetilde G$. Such extensions were studied
by Farjoun and Segev in connection with relative homology and
universal extensions of group homomorphisms
\cite{FarjounSegev2017}. Put
\[
H_F(G)=\ker\eta_G
\]
and
\[
N_F(f)=
\left\langle F(f)\bigl(H_F(\Gamma)\bigr)\right\rangle^{F(G)}.
\]
We define
\[
F_f(G)=\frac{F(G)}{N_F(f)}.
\]
The basic result is the following.

\begin{theorem*}
Let $(F,\eta)$ be a $\mathcal C$-lifting functor which sends
morphisms in $\mathcal C$ to epimorphisms. If
$p\colon\widetilde G\to G$ is an $f$-extension belonging to
$\mathcal C$, then there is an epimorphism
\[
F_f(G)\longrightarrow\operatorname{Im}\eta_{\widetilde G}.
\]
Moreover, there is an exact sequence
\[
1\longrightarrow
\frac{H_F(G)}{N_F(f)}
\longrightarrow F_f(G)
\longrightarrow\operatorname{Im}\eta_G
\longrightarrow1.
\]
\end{theorem*}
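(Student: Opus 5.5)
The plan is to build the epimorphism directly from the lifting map $S$, then derive the exact sequence by a purely formal argument that does not use extensions at all.

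\textbf{Setup.} Fix an $f$-extension $p\colon\widetilde G\to G$ in $\mathcal C$ and a lifting $\tilde f\colon\Gamma\to\widetilde G$ with $p\tilde f=f$. Since $(F,\eta)$ is $\mathcal C$-lifting, there is $S\colon F(G)\to\widetilde G$ with $pS=\eta_G$ and $SF(p)=\eta_{\widetilde G}$.

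\textbf{Step 1: $S$ has image $\operatorname{Im}\eta_{\widetilde G}$.} By hypothesis $F(p)$ is an epimorphism, so
\[
S\bigl(F(G)\bigr)=S F(p)\bigl(F(\widetilde G)\bigr)=\eta_{\widetilde G}\bigl(F(\widetilde G)\bigr).
\]
Thus $S$ is an epimorphism $F(G)\to\operatorname{Im}\eta_{\widetilde G}$.

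\textbf{Step 2: $S$ kills $N_F(f)$.} Take $x\in H_F(\Gamma)=\ker\eta_\Gamma$. Functoriality gives $F(f)=F(p)F(\tilde f)$, so
\[
S F(f)(x)=S F(p) F(\tilde f)(x)=\eta_{\widetilde G}F(\tilde f)(x)=\tilde f\,\eta_\Gamma(x)=1,
\]
where the third equality is naturality of $\eta$ applied to $\tilde f$. Hence $F(f)(H_F(\Gamma))\subseteq\ker S$. Because $\ker S$ is normal in $F(G)$, it also contains the normal closure $N_F(f)$. So $S$ factors through $F_f(G)=F(G)/N_F(f)$, and the induced map is still onto $\operatorname{Im}\eta_{\widetilde G}$.

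The only place where real care is needed is this factorization, specifically the interplay of naturality along $\tilde f$ with the identity $SF(p)=\eta_{\widetilde G}$. It is also the only place where the choice of lift $\tilde f$ enters. The conclusion does not depend on that choice, since $N_F(f)$ is defined purely in terms of $f$.

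\textbf{Step 3: the exact sequence.} Apply Step 2 to $\eta_G$ itself, which avoids needing the identity $p=\id_G$ to lie in $\mathcal C$. For $x\in H_F(\Gamma)$, naturality of $\eta$ along $f$ gives
\[
\eta_G F(f)(x)=f\,\eta_\Gamma(x)=1,
\]
so $F(f)(H_F(\Gamma))\subseteq H_F(G)$. As $H_F(G)=\ker\eta_G$ is normal in $F(G)$, we get $N_F(f)\subseteq H_F(G)$. Therefore $\eta_G$ induces a surjection $F_f(G)\to\operatorname{Im}\eta_G$ whose kernel is $H_F(G)/N_F(f)$. This yields
\[
1\longrightarrow \frac{H_F(G)}{N_F(f)}\longrightarrow F_f(G)\longrightarrow \operatorname{Im}\eta_G\longrightarrow 1
\]
by the third isomorphism theorem.
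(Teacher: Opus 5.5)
Your proof is correct and is essentially the paper's own argument. Step 2 is Lemma~\ref{lem:f-extension-factorization}, via $SF(f)=SF(p)F(\widetilde f)=\widetilde f\,\eta_\Gamma$ and normality of $\ker S$; Step 1 is the image computation in Theorem~\ref{thm:relative-lifting}, using surjectivity of $F(p)$; and Step 3 is Proposition~\ref{prop:relative-exact-sequence}, obtained from $N_F(f)\subseteq\ker\eta_G$ by naturality of $\eta$ along $f$.
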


The quotient $F_f(G)$ is functorial in the homomorphism $f$.
More precisely, the construction extends to the arrow category of
groups. For an $f$-extension $p\colon\widetilde G\to G$, this
functoriality gives a morphism between the exact sequence above
and the corresponding exact sequence in $\widetilde G$. In the
case of the exterior square this gives an epimorphism
\[
\frac{H_2(G;\mathbb Z)}{f_*H_2(\Gamma;\mathbb Z)}
\longrightarrow
\ker p\cap[\widetilde G,\widetilde G].
\]
Thus the relative Schur kernel surjects onto the part of the commutator subgroup of $\widetilde G$
which lies in the kernel of the extension.

We apply the general construction to iterated tensor and exterior
powers. If
\[
p\colon\widetilde G\longrightarrow G
\]
is an $f$-extension with
\[
\ker p\subseteq Z_{n-1}(\widetilde G),
\]
then there is an epimorphism
\[
G_f^{\otimes n}\longrightarrow\gamma_n(\widetilde G).
\]
For the corresponding subgroups associated with the derived
series there is an epimorphism
\[
G_f^{\wedge(n)}\longrightarrow\Gamma_n(\widetilde G).
\]
Together with the natural exact sequences for these relative
products, this gives relative Schur--Baer theorems for every class
of groups closed under extensions and homomorphic images.

In degree two,
\[
G\wedge_fG=
\frac{G\wedge G}{f_*H_2(\Gamma;\mathbb Z)}.
\]
The kernel
\[
\frac{H_2(G;\mathbb Z)}{f_*H_2(\Gamma;\mathbb Z)}
\]
is related to the relative Schur multiplier by the exact sequence
\[
0\longrightarrow
\frac{H_2(G;\mathbb Z)}{f_*H_2(\Gamma;\mathbb Z)}
\longrightarrow H_2(G,\Gamma;\mathbb Z)
\longrightarrow
\ker\bigl(H_1(\Gamma;\mathbb Z)\to H_1(G;\mathbb Z)\bigr)
\longrightarrow0.
\]
We also compare the relative tensor and exterior squares using the
natural epimorphism $G\otimes G\to G\wedge G$.

Finally, the relative construction gives orderability results. If
a central $f$-extension of $G$ is left-orderable and
\[
H_2(G;\mathbb Z)/f_*H_2(\Gamma;\mathbb Z)
\]
is torsion-free, then $G\wedge_fG$ is left-orderable. An analogous
result holds for the tensor square. Orderability criteria for
ordinary tensor and exterior squares were obtained in
\cite{Ivanov2024} and applied there to tabulated virtual knot
groups. The relative construction gives a global consequence: every virtual knot group is left-orderable if and
only if it is circularly orderable. The proof uses the ordering extension of a
circularly orderable group and the homomorphism
$\mathbb Z^2\to G$ defined by a meridian and a longitude. The
corresponding Pontryagin product generates $H_2(G;\mathbb Z)$
\cite{Kim2000}, while the Euler class of the ordering extension
restricts trivially to $\mathbb Z^2$.

The paper is organized as follows. In
Section~\ref{sec:lifting-functors} we introduce lifting functors
and relative lifting quotients. In Section~\ref{sec:functoriality}
we construct the relative functor on the arrow category, study
morphisms of $f$-extensions, and introduce universal lifting
quotients. In Section~\ref{sec:tensor-exterior} we apply the theory to
iterated tensor and exterior powers and compare the relative
tensor and exterior squares. In Section~\ref{sec:relative-homology}
we relate the construction to the relative Schur multiplier of
Farjoun and Segev. Section~\ref{sec:applications} contains
algebraic and topological examples. Finally,
Section~\ref{sec:orderability} gives orderability applications.

\section{Relative lifting quotients}
\label{sec:lifting-functors}
Let $\mathcal C$ be a class of group homomorphisms.

\begin{definition}\label{def:C-lifting-functor}
Let
\(
F\colon\Grp\longrightarrow\Grp
\)
be an endofunctor and let
\(
\eta\colon F\Longrightarrow\operatorname{Id}_{\Grp}
\)
be a natural transformation. We say that $(F,\eta)$ is a
\emph{$\mathcal C$-lifting functor} if, for every homomorphism
\(
p\colon\widetilde G\longrightarrow G
\)
belonging to $\mathcal C$, there is a homomorphism
\[
S\colon F(G)\longrightarrow\widetilde G
\]
such that
\[
pS=\eta_G
\qquad\text{and}\qquad
SF(p)=\eta_{\widetilde G}.
\]
Such a homomorphism $S$ is called a \emph{lifting of $\eta$
along $p$}.
\end{definition}
Put
\[
H_F(G)=\ker\eta_G.
\]
\begin{proposition}\label{prop:lifting-kernel-sequence}
Let $p\colon \widetilde G\to G$ belong to $\mathcal C$, and let
$S\colon F(G)\to\widetilde G$ be a lifting. Then
\[
H_F(\widetilde G)
\xrightarrow{\,F(p)\,}
H_F(G)
\xrightarrow{\,S\,}
\ker p
\]
is a complex. It is exact at $H_F(G)$ whenever $F(p)$ is an
epimorphism.
\end{proposition}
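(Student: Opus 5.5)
The plan is to check three things, each directly from the identities $pS=\eta_G$ and $SF(p)=\eta_{\widetilde G}$ together with naturality of $\eta$: that the maps in the sequence are well defined, that the composite is trivial, and that the image of $H_F(\widetilde G)$ fills the kernel of $S$ restricted to $H_F(G)$ when $F(p)$ is onto.

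\emph{Well-definedness.} Naturality of $\eta$ gives $\eta_G F(p)=p\,\eta_{\widetilde G}$. Hence if $x\in H_F(\widetilde G)$, then $\eta_G(F(p)x)=p(\eta_{\widetilde G}x)=1$. So $F(p)$ maps $H_F(\widetilde G)$ into $H_F(G)$. Next, if $y\in H_F(G)$, then $p(S(y))=\eta_G(y)=1$, so $S$ maps $H_F(G)$ into $\ker p$.

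\emph{Complex.} For $x\in H_F(\widetilde G)$ we have $S(F(p)x)=\eta_{\widetilde G}(x)=1$. Thus the composite $H_F(\widetilde G)\to\ker p$ is trivial, and $F(p)(H_F(\widetilde G))\subseteq\ker(S|_{H_F(G)})$.

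\emph{Exactness when $F(p)$ is an epimorphism.} Let $y\in H_F(G)$ with $S(y)=1$.
\begin{enumerate}
\item Since $F(p)$ is surjective, choose $x\in F(\widetilde G)$ with $F(p)x=y$.
\item Then $\eta_{\widetilde G}(x)=S(F(p)x)=S(y)=1$, so $x\in H_F(\widetilde G)$.
\item Hence $y$ lies in $F(p)(H_F(\widetilde G))$, which gives exactness at $H_F(G)$.
\end{enumerate}

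The only step needing care is the second one, where the preimage $x$ is shown to lie in $H_F(\widetilde G)$ rather than merely in $F(\widetilde G)$. This is exactly where the identity $SF(p)=\eta_{\widetilde G}$ is used. The rest is routine and needs no normality or abelianness hypotheses, since all statements concern images and kernels of homomorphisms.
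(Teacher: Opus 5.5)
Your proposal is correct and follows essentially the same argument as the paper: triviality of the composite from $SF(p)=\eta_{\widetilde G}$, the target $\ker p$ from $pS=\eta_G$, and exactness by lifting $y$ through the surjection $F(p)$ and checking $\eta_{\widetilde G}(x)=S(y)=1$. The only addition is your explicit naturality check that $F(p)$ maps $H_F(\widetilde G)$ into $H_F(G)$, which the paper leaves implicit.
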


\begin{proof}
For $x\in H_F(\widetilde G)$,
\[
SF(p)(x)=\eta_{\widetilde G}(x)=1.
\]
Also, if $y\in H_F(G)$, then
\[
pS(y)=\eta_G(y)=1,
\]
and hence $S(y)\in\ker p$.

Suppose now that $F(p)$ is an epimorphism and that
$y\in H_F(G)$ satisfies $S(y)=1$. Choose
$x\in F(\widetilde G)$ such that $F(p)(x)=y$. Then
\[
\eta_{\widetilde G}(x)
   =SF(p)(x)
   =S(y)
   =1.
\]
Thus $x\in H_F(\widetilde G)$, and the sequence is exact.
\end{proof}

Let $f\colon\Gamma\to G$ be a homomorphism. Define
\[
N_F(f)=
\left\langle F(f)\bigl(H_F(\Gamma)\bigr)
\right\rangle^{F(G)},
\]
where $\langle - \rangle^{F(G)}$ denotes normal closure in $F(G)$.
Naturality of $\eta$ gives
\[
\eta_GF(f)=f\eta_\Gamma,
\]
and therefore
\[
N_F(f)\subseteq H_F(G).
\]
Set
\[
F_f(G)=\frac{F(G)}{N_F(f)}
\qquad\text{and}\qquad
H_F(f)=\frac{H_F(G)}{N_F(f)}.
\]

The homomorphism $\eta_G$ induces a homomorphism
\[
\eta_f\colon F_f(G)\longrightarrow G.
\]

\begin{proposition}\label{prop:relative-exact-sequence}
There is a natural exact sequence
\[
1\longrightarrow H_F(f)
\longrightarrow F_f(G)
\xrightarrow{\,\eta_f\,}
\operatorname{Im}\eta_G
\longrightarrow 1.
\]
\end{proposition}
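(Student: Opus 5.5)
The plan is to obtain the sequence by passing to the quotient $F(G)/N_F(f)$ in the tautological exact sequence
\[
1\longrightarrow H_F(G)\longrightarrow F(G)\xrightarrow{\,\eta_G\,}\operatorname{Im}\eta_G\longrightarrow1.
\]
First I check that $\eta_f$ is well defined. As noted before the statement, $N_F(f)\subseteq H_F(G)=\ker\eta_G$, and $N_F(f)$ is normal in $F(G)$ by construction as a normal closure. So $\eta_G$ factors through $F_f(G)$, and the factored map $\eta_f$ has the same image $\operatorname{Im}\eta_G$. This gives surjectivity of $F_f(G)\to\operatorname{Im}\eta_G$.

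Next I identify the kernel. For a coset $xN_F(f)$ we have $\eta_f(xN_F(f))=\eta_G(x)$, so the coset lies in $\ker\eta_f$ if and only if $x\in H_F(G)$. Hence $\ker\eta_f=H_F(G)/N_F(f)=H_F(f)$, which is the third isomorphism theorem. The inclusion $H_F(f)\to F_f(G)$ is induced by $H_F(G)\hookrightarrow F(G)$ and is injective. This proves exactness at all three spots.

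The only point needing care is naturality, and I expect it to be the main obstacle, mainly in making precise what it is natural with respect to. The paper later works on the arrow category, so I will take a morphism $(\alpha,\beta)$ from $f\colon\Gamma\to G$ to $f'\colon\Gamma'\to G'$ with $\beta f=f'\alpha$.

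1. Functoriality of $F$ gives
\[
F(\beta)F(f)\bigl(H_F(\Gamma)\bigr)=F(f')F(\alpha)\bigl(H_F(\Gamma)\bigr)\subseteq F(f')\bigl(H_F(\Gamma')\bigr).
\]
The last inclusion uses $F(\alpha)(H_F(\Gamma))\subseteq H_F(\Gamma')$, which follows from naturality of $\eta$.
2. A homomorphism maps a normal closure into the normal closure of the image. Therefore $F(\beta)(N_F(f))\subseteq N_F(f')$, and $F(\beta)$ descends to a map $F_f(G)\to F_{f'}(G')$.
3. Naturality of $\eta$ gives $\eta_{G'}F(\beta)=\beta\eta_G$. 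Hence $F(\beta)$ restricts to $H_F(G)\to H_F(G')$, and $\beta$ restricts to $\operatorname{Im}\eta_G\to\operatorname{Im}\eta_{G'}$.

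Together these make the three squares commute, so the sequence is natural in $f$.
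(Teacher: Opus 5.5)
Your proposal is correct and follows the paper's argument: since $N_F(f)\subseteq H_F(G)$, the map $\eta_G$ factors through $F_f(G)$, and you read off the image $\operatorname{Im}\eta_G$ and the kernel $H_F(G)/N_F(f)$. The paper's own proof leaves naturality implicit; your check over the arrow category is the inclusion $F(b)\bigl(N_F(f_1)\bigr)\subseteq N_F(f_2)$, which the paper proves the same way in Section~3 when it defines $\beta_{a,b}$.
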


\begin{proof}
Since $N_F(f)\subseteq\ker\eta_G$, the map $\eta_f$ is defined.
Its image is $\operatorname{Im}\eta_G$, and
\[
\ker\eta_f=\frac{H_F(G)}{N_F(f)}=H_F(f).
\]
\end{proof}

Recall that an $f$-extension of $G$ is an epimorphism
$p\colon\widetilde G\to G$ for which there is a homomorphism
$\widetilde f\colon\Gamma\to\widetilde G$ satisfying
$p\widetilde f=f$.

\begin{lemma}\label{lem:f-extension-factorization}
Let $p\colon\widetilde G\to G$ be an $f$-extension belonging to
$\mathcal C$, and let $S\colon F(G)\to\widetilde G$ be a
lifting. Then
\[
N_F(f)\subseteq\ker S.
\]
Consequently, $S$ induces a homomorphism
\[
\overline S\colon F_f(G)\longrightarrow\widetilde G.
\]
\end{lemma}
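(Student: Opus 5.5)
The proof is a direct computation using the lifting $\widetilde f\colon\Gamma\to\widetilde G$ with $p\widetilde f=f$, which exists because $p$ is an $f$-extension. Since $\ker S$ is a normal subgroup of $F(G)$ and $N_F(f)$ is the normal closure of $F(f)\bigl(H_F(\Gamma)\bigr)$, it suffices to check that the generating set $F(f)\bigl(H_F(\Gamma)\bigr)$ lies in $\ker S$. The normal closure is then automatically contained in $\ker S$.

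Take $x\in H_F(\Gamma)$. By functoriality of $F$, $F(f)=F(p\widetilde f)=F(p)F(\widetilde f)$. Applying the defining identity $SF(p)=\eta_{\widetilde G}$ from Definition~\ref{def:C-lifting-functor}, we get
\[
S\bigl(F(f)(x)\bigr)=SF(p)\bigl(F(\widetilde f)(x)\bigr)=\eta_{\widetilde G}\bigl(F(\widetilde f)(x)\bigr).
\]
Naturality of $\eta$ applied to $\widetilde f$ gives $\eta_{\widetilde G}F(\widetilde f)=\widetilde f\eta_\Gamma$. Hence the right-hand side equals $\widetilde f(\eta_\Gamma(x))=\widetilde f(1)=1$, because $x\in\ker\eta_\Gamma$. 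Therefore $F(f)\bigl(H_F(\Gamma)\bigr)\subseteq\ker S$, and so $N_F(f)\subseteq\ker S$.

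The induced homomorphism $\overline S\colon F_f(G)=F(G)/N_F(f)\to\widetilde G$ then exists by the universal property of the quotient.

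There is no real obstacle. The one point to watch is that the argument uses only the identity $SF(p)=\eta_{\widetilde G}$ together with naturality along $\widetilde f$. In particular, it does not need $F(p)$ to be an epimorphism, and it does not depend on which lifting $\widetilde f$ is chosen.
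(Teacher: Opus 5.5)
Your proof is correct and matches the paper's argument. Like the paper, you factor $F(f)=F(p)F(\widetilde f)$, use $SF(p)=\eta_{\widetilde G}$ and naturality to get $SF(f)=\widetilde f\eta_\Gamma$, which kills $H_F(\Gamma)$, and conclude from the normality of $\ker S$. Your side remark is also accurate: the paper only imposes the standing assumption that $F(p)$ is an epimorphism after this lemma, and the lemma does not need it.
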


\begin{proof}
Let $\widetilde f\colon\Gamma\to\widetilde G$ be a lifting of
$f$. Then
\[
SF(f)
 =SF(p)F(\widetilde f)
 =\eta_{\widetilde G}F(\widetilde f)
 =\widetilde f\eta_\Gamma.
\]
It follows that $SF(f)$ vanishes on $H_F(\Gamma)$. Since
$\ker S$ is normal in $F(G)$, it contains $N_F(f)$.
\end{proof}

\textbf{From now on through the rest of the paper we assume that $F(p)$ is an epimorphism for every
$p\in\mathcal C$.}

\begin{theorem}\label{thm:relative-lifting}
Let $p\colon\widetilde G\to G$ be an $f$-extension belonging to
$\mathcal C$. Then every lifting
$S\colon F(G)\to\widetilde G$ induces an epimorphism
\[
\overline S\colon
F_f(G)\longrightarrow\operatorname{Im}\eta_{\widetilde G}.
\]
\end{theorem}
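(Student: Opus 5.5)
The plan is to combine Lemma~\ref{lem:f-extension-factorization} with the identity $SF(p)=\eta_{\widetilde G}$ and the standing assumption that $F(p)$ is an epimorphism.

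First, Lemma~\ref{lem:f-extension-factorization} gives $N_F(f)\subseteq\ker S$. Hence $S$ factors through the quotient map $\pi\colon F(G)\to F_f(G)$, and we obtain
\[
\overline S\colon F_f(G)\longrightarrow\widetilde G,
\qquad
\overline S\pi=S.
\]
Since $\pi$ is surjective, $\operatorname{Im}\overline S=\operatorname{Im}S$. It therefore suffices to show that $\operatorname{Im}S=\operatorname{Im}\eta_{\widetilde G}$.

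For this, use the defining identity $SF(p)=\eta_{\widetilde G}$ of a lifting. By the standing assumption, $F(p)\colon F(\widetilde G)\to F(G)$ is an epimorphism. Therefore
\[
\operatorname{Im}S
=S\bigl(F(p)(F(\widetilde G))\bigr)
=\operatorname{Im}\bigl(SF(p)\bigr)
=\operatorname{Im}\eta_{\widetilde G}.
\]
So $\overline S$ corestricts to an epimorphism $F_f(G)\to\operatorname{Im}\eta_{\widetilde G}$, as claimed.

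There is no real obstacle. The only point needing care is that the surjectivity of $F(p)$ is essential for the inclusion $\operatorname{Im}S\subseteq\operatorname{Im}\eta_{\widetilde G}$; the reverse inclusion holds without it. One should also note that the $f$-extension hypothesis enters only through Lemma~\ref{lem:f-extension-factorization}.
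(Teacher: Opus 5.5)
Your proposal is correct and follows the paper's proof: you factor $S$ through $F_f(G)$ via Lemma~\ref{lem:f-extension-factorization}, then use surjectivity of $F(p)$ and $SF(p)=\eta_{\widetilde G}$ to get $\operatorname{Im}\overline S=\operatorname{Im}S=\operatorname{Im}\eta_{\widetilde G}$. Your remark on which inclusion needs the surjectivity of $F(p)$ is also accurate.
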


\begin{proof}
By Lemma~\ref{lem:f-extension-factorization}, the homomorphism
$S$ factors through $F_f(G)$. Since $F(p)$ is an epimorphism,
\[
S(F(G))
 =S\bigl(F(p)(F(\widetilde G))\bigr)
 =\eta_{\widetilde G}(F(\widetilde G)).
\]
Thus $\operatorname{Im}\overline S
=\operatorname{Im}\eta_{\widetilde G}$.
\end{proof}

\begin{corollary}[Generalized Schur--Baer theorem]
\label{cor:general-schur-baer}
Let $\mathcal P$ be a class of groups closed under extensions
and homomorphic images. Let
$p\colon\widetilde G\to G$ be an $f$-extension belonging to
$\mathcal C$. If
\[
H_F(f)\in\mathcal P
\qquad\text{and}\qquad
\operatorname{Im}\eta_G\in\mathcal P,
\]
then
\[
\operatorname{Im}\eta_{\widetilde G}\in\mathcal P.
\]
\end{corollary}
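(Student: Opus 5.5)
The plan is to combine the exact sequence of Proposition~\ref{prop:relative-exact-sequence} with the epimorphism of Theorem~\ref{thm:relative-lifting}, using only the two closure properties of $\mathcal P$.

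First, apply Proposition~\ref{prop:relative-exact-sequence} to the homomorphism $f$. It gives the exact sequence
\[
1\longrightarrow H_F(f)\longrightarrow F_f(G)\xrightarrow{\,\eta_f\,}\operatorname{Im}\eta_G\longrightarrow 1 .
\]
This exhibits $F_f(G)$ as an extension of $H_F(f)$ by $\operatorname{Im}\eta_G$. Both groups lie in $\mathcal P$ by hypothesis. Since $\mathcal P$ is closed under extensions, we get $F_f(G)\in\mathcal P$.

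Second, since $p$ is an $f$-extension in $\mathcal C$, it admits a lifting $S\colon F(G)\to\widetilde G$ by Definition~\ref{def:C-lifting-functor}. We are assuming that $F(p)$ is an epimorphism, so Theorem~\ref{thm:relative-lifting} applies. It shows that $S$ induces an epimorphism
\[
\overline S\colon F_f(G)\longrightarrow\operatorname{Im}\eta_{\widetilde G}.
\]
Hence $\operatorname{Im}\eta_{\widetilde G}$ is a homomorphic image of $F_f(G)$. Closure of $\mathcal P$ under homomorphic images then gives $\operatorname{Im}\eta_{\widetilde G}\in\mathcal P$.

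There is no real obstacle; the work was done in the earlier results. The only point to check is the convention for ``closed under extensions''. It should mean that whenever $N\trianglelefteq E$ with $N\in\mathcal P$ and $E/N\in\mathcal P$, then $E\in\mathcal P$. This is the form we use, with $N=H_F(f)$ and $E=F_f(G)$. Here $H_F(f)$ is indeed normal in $F_f(G)$, because it is the kernel of $\eta_f$.
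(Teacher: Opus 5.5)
Your proposal is correct and follows the paper's own proof: Proposition~\ref{prop:relative-exact-sequence} together with closure under extensions gives $F_f(G)\in\mathcal P$. Theorem~\ref{thm:relative-lifting} then exhibits $\operatorname{Im}\eta_{\widetilde G}$ as a homomorphic image of $F_f(G)$.
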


\begin{proof}
Proposition~\ref{prop:relative-exact-sequence} shows that
$F_f(G)\in\mathcal P$. By
Theorem~\ref{thm:relative-lifting},
$\operatorname{Im}\eta_{\widetilde G}$ is a homomorphic image
of $F_f(G)$.
\end{proof}

\begin{corollary}\label{cor:relative-finite}
Let $p\colon\widetilde G\to G$ be an $f$-extension belonging to
$\mathcal C$. If $H_F(f)$ and $\operatorname{Im}\eta_G$ are
finite, then $\operatorname{Im}\eta_{\widetilde G}$ is finite.
Moreover,
\[
\left|\operatorname{Im}\eta_{\widetilde G}\right|
\leq
|H_F(f)|\,|\operatorname{Im}\eta_G|.
\]
\end{corollary}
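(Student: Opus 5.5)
The argument combines the exact sequence of Proposition~\ref{prop:relative-exact-sequence} with the epimorphism of Theorem~\ref{thm:relative-lifting}; it is the finite, quantitative version of Corollary~\ref{cor:general-schur-baer}.

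First, Proposition~\ref{prop:relative-exact-sequence} gives the exact sequence
\[
1\longrightarrow H_F(f)\longrightarrow F_f(G)\xrightarrow{\,\eta_f\,}\operatorname{Im}\eta_G\longrightarrow 1 .
\]
Thus $H_F(f)$ is a normal subgroup of $F_f(G)$ with quotient isomorphic to $\operatorname{Im}\eta_G$. By Lagrange's theorem, if both groups are finite, then $F_f(G)$ is finite and
\[
|F_f(G)|=|H_F(f)|\,|\operatorname{Im}\eta_G|.
\]

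Next, since $p$ is an $f$-extension in $\mathcal C$, we may choose a lifting $S\colon F(G)\to\widetilde G$; one exists by Definition~\ref{def:C-lifting-functor}. By Theorem~\ref{thm:relative-lifting}, $S$ induces an epimorphism
\[
\overline S\colon F_f(G)\longrightarrow\operatorname{Im}\eta_{\widetilde G}.
\]
This uses the standing assumption that $F(p)$ is an epimorphism. A homomorphic image of a finite group is finite, and its order is at most that of the source. Hence
\[
|\operatorname{Im}\eta_{\widetilde G}|\le |F_f(G)|=|H_F(f)|\,|\operatorname{Im}\eta_G|.
\]

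There is no real obstacle here; everything follows from the earlier results. The only point to check is the direction of the sequence: $H_F(f)$ is the kernel and $\operatorname{Im}\eta_G$ is the image, so the order of $F_f(G)$ is exactly the product of the two.
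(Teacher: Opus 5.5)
Your proposal is correct and follows the paper's own argument: the paper likewise obtains the corollary by combining the exact sequence of Proposition~\ref{prop:relative-exact-sequence}, which gives $|F_f(G)|=|H_F(f)|\,|\operatorname{Im}\eta_G|$, with the epimorphism $F_f(G)\to\operatorname{Im}\eta_{\widetilde G}$ of Theorem~\ref{thm:relative-lifting}. You also correctly note that the lifting $S$ exists by the definition of a $\mathcal C$-lifting functor, and that the standing assumption that $F(p)$ is an epimorphism is what makes $\overline S$ surjective.
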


\begin{proof}
The assertion follows from
Proposition~\ref{prop:relative-exact-sequence} and
Theorem~\ref{thm:relative-lifting}.
\end{proof}

\section{Functoriality and universal lifting quotients}
\label{sec:functoriality}

Let $(F,\eta)$ be as in the preceding section. Put
\[
\overline F(G)=\frac{F(G)}{H_F(G)}
\]
and denote the quotient map by
\[
q_G\colon F(G)\longrightarrow\overline F(G).
\]
Since $H_F$ is a subfunctor of $F$, the assignment
$G\mapsto\overline F(G)$ defines an endofunctor of $\Grp$. The
homomorphism $\eta_G$ induces a monomorphism
\[
\overline\eta_G\colon\overline F(G)\longrightarrow G.
\]
Thus
\[
\overline\eta\colon\overline F\Longrightarrow\id_{\Grp}
\]
is a natural transformation and
\[
\overline F(G)\cong\operatorname{Im}\eta_G.
\]

\subsection{The relative functor on the arrow category}

Let $\operatorname{Arr}(\Grp)$ be the arrow category of groups.
Its objects are homomorphisms
\[
f\colon\Gamma\longrightarrow G,
\]
and a morphism
\[
(a,b)\colon f_1\longrightarrow f_2
\]
is a commutative square
\[
\begin{tikzcd}
\Gamma_1 \arrow[r,"f_1"] \arrow[d,"a"']
    & G_1 \arrow[d,"b"]\\
\Gamma_2 \arrow[r,"f_2"']
    & G_2.
\end{tikzcd}
\]
Composition is componentwise:
\[
(a_2,b_2)(a_1,b_1)=(a_2a_1,b_2b_1).
\]

For classes $\mathcal D$ and $\mathcal C$ of homomorphisms, let
\[
\widehat{\mathcal D,\mathcal C}
\]
denote the class of morphisms $(a,b)$ in
$\operatorname{Arr}(\Grp)$ for which
\[
a\in\mathcal D,
\qquad
b\in\mathcal C.
\]

We use the same notion of lifting in the arrow category. More
explicitly, suppose that $\mathscr G$ is an endofunctor of
$\operatorname{Arr}(\Grp)$ and that for every arrow $f$ there is
a morphism
\[
\theta_f\colon\mathscr G(f)\longrightarrow f
\]
such that
\[
(a,b)\theta_{f_1}
=
\theta_{f_2}\mathscr G(a,b)
\]
for every $(a,b)\colon f_1\to f_2$. For a class $\mathcal A$ of
morphisms in $\operatorname{Arr}(\Grp)$, we say that
$(\mathscr G,\theta)$ is an $\mathcal A$-lifting functor if for every $u\colon f_1\longrightarrow f_2$ in $\mathcal A$ there is a morphism
\[
L\colon\mathscr G(f_2)\longrightarrow f_1
\]
such that
\[
uL=\theta_{f_2},
\qquad
L\mathscr G(u)=\theta_{f_1}.
\]

We shall use the following elementary observation.

\begin{lemma}\label{lem:cube-diagonals}
Consider a commutative cube
\[
\begin{tikzcd}[
  row sep=3.5em,
  column sep=3.8em,
  cells={nodes={inner sep=1pt}}
]
A_1
  \arrow[rr,"f_1"]
  \arrow[dd,"u"' {pos=0.30}]
  \arrow[dr,"x_1" {pos=0.30}]
&&
B_1
  \arrow[dd,"v" {pos=0.25}]
  \arrow[dr,"y_1" {pos=0.30}]
&
\\
&
C_1
  \arrow[rr,"g_1" {pos=0.72}]
  \arrow[dd,"p"' {pos=0.25}]
&&
D_1
  \arrow[dd,"q" {pos=0.65}]
\\
A_2
  \arrow[rr,swap,"f_2"' {pos=0.72}]
  \arrow[dr,"x_2"' {pos=0.72}]
  \arrow[ur,dashed,"t" {pos=0.30}]
&&
B_2
  \arrow[dr,swap, "y_2"' {pos=0.72}]
  \arrow[ur,dashed,"s" {pos=0.30}]
&
\\
&
C_2
  \arrow[rr,swap,"g_2"']
&&
D_2 .
\end{tikzcd}
\]
Suppose that the dashed diagonals make the left and right faces
commutative. If $u$ is an epimorphism, then
\[
sf_2=g_1t.
\]
\end{lemma}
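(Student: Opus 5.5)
Since $u\colon A_2\to A_1$ is an epimorphism of groups, it is in particular right-cancellable: two homomorphisms $h,h'\colon A_2\to D_1$ with $hu=h'u$ must coincide. The plan is to check the identity $sf_2=g_1t$ after precomposing with $u$, and then cancel $u$.

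First we fix the orientation. The vertical maps $u,v,p,q$ go from the top layer to the bottom layer, and the dashed maps $t\colon A_2\to C_1$ and $s\colon B_2\to D_1$ go back up the diagonal. The left face involves $A_1,C_1,A_2,C_2$ with maps $x_1,x_2,u,p,t$. We read its commutativity as the two triangles
\[
tu=x_1,\qquad pt=x_2 .
\]
Similarly, commutativity of the right face gives
\[
sv=y_1,\qquad qs=y_2 .
\]
Only the upper triangles $tu=x_1$ and $sv=y_1$ will be needed. From the original cube we also use the top face $y_1f_1=g_1x_1$ and the back face $vf_1=f_2u$.

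The computation is then a single chain:
\[
sf_2u = svf_1 = y_1f_1 = g_1x_1 = g_1tu .
\]
Here the first equality is the back face, the second is the upper right triangle, the third is the top face, and the fourth is the upper left triangle. Since $u$ is an epimorphism, hence surjective and right-cancellable, this gives $sf_2=g_1t$.

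The only point needing care is the interpretation of ``the dashed diagonals make the left and right faces commutative''. It must include the triangles $tu=x_1$ and $sv=y_1$, which correspond to the lifting identities $SF(p)=\eta_{\widetilde G}$ in the intended application. If the hypothesis were meant only in the weaker lower-triangle form $pt=x_2$, the argument would instead need $q$ to be a monomorphism, which is not assumed. So I would state explicitly at the outset that both triangles in each face commute. After that, the proof is the four-step chain above.
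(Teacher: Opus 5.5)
Your proof is correct and matches the paper's argument exactly: the chain $sf_2u=svf_1=y_1f_1=g_1x_1=g_1tu$ (back face, upper-right triangle, top face, upper-left triangle), followed by cancelling the epimorphism $u$. One small slip: your first sentence writes $u\colon A_2\to A_1$, but $u$ goes $A_1\to A_2$, as you state correctly afterwards and as your cancellation statement for maps $A_2\to D_1$ already assumes.
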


\begin{proof}
One has
\[
sf_2u
=svf_1
=y_1f_1
=g_1x_1
=g_1tu.
\]
To get the equuality, cancel $u$.
\end{proof}
We want to define a relative functor. For $f\colon\Gamma\to G$, recall the quotient map
\[
\pi_f\colon F(G)\longrightarrow F_f(G).
\]
Since $\pi_fF(f)$ vanishes on $H_F(\Gamma)$, it induces a
homomorphism
\[
\rho_f\colon\overline F(\Gamma)\longrightarrow F_f(G)
\]
determined by $\rho_fq_\Gamma=\pi_fF(f)$.
Define
\[
\mathscr F(f)=
\left(
\overline F(\Gamma)\xrightarrow{\rho_f}F_f(G)
\right).
\]

Let $(a,b)\colon f_1\longrightarrow f_2$
be a morphism of arrows. Naturality of $\eta$ gives
\[
F(a)\bigl(H_F(\Gamma_1)\bigr)
\subseteq H_F(\Gamma_2),
\]
and therefore
\[
F(b)\bigl(N_F(f_1)\bigr)
\subseteq N_F(f_2).
\]
Hence $F(b)$ induces a homomorphism
\[
\beta_{a,b}\colon
F_{f_1}(G_1)\longrightarrow F_{f_2}(G_2)
\]
satisfying
\[
\beta_{a,b}\pi_{f_1}=\pi_{f_2}F(b).
\]
Put
\[
\mathscr F(a,b)=(\overline F(a),\beta_{a,b}).
\]
To see that this is a morphism of arrows, consider a cube
\[
\begin{tikzcd}[
  row sep=3.5em,
  column sep=3.8em,
  cells={nodes={inner sep=1pt}}
]
F(\Gamma_1)
  \arrow[rr,"F(a)"]
  \arrow[dd,"q_{\Gamma_1}"' {pos=0.30}]
  \arrow[dr,"\pi_{f_1}F(f_1)" {pos=0.30}]
&&
F(\Gamma_2)
  \arrow[dd,"q_{\Gamma_2}" {pos=0.25}]
  \arrow[dr,"\pi_{f_2}F(f_2)" {pos=0.30}]
&
\\
&
F_f(G_1)
  \arrow[rr,"\beta_{a,b}" {pos=0.72}]
  \arrow[dd,"="' {pos=0.25}]
&&
F_{f_2}(G_2)
  \arrow[dd,"=" {pos=0.65}]
\\
\overline F(\Gamma_1)
  \arrow[rr,swap,"\overline F(a)"' {pos=0.72}]
  \arrow[dr,"\rho_{f_1}"' {pos=0.72}]
  \arrow[ur,dashed,"\rho_{f_1}" {pos=0.30}]
&&
\overline F(\Gamma_2)
  \arrow[dr,swap, "\rho_{f_2}"' {pos=0.72}]
  \arrow[ur,dashed,"\rho_{f_2}" {pos=0.30}]
&
\\
&
F_{f_1}(G_1)
  \arrow[rr,swap,"\beta_{a,b}"']
&&
F_{f_2}(G_2) .
\end{tikzcd}
\]

Since $q_{\Gamma_1}$ is an epimorphism,
\[
\beta_{a,b}\rho_{f_1}
=
\rho_{f_2}\overline F(a).
\]
The defining equality for $\beta_{a,b}$ also shows immediately
that identities and compositions are preserved. Thus
\[
\mathscr F\colon\operatorname{Arr}(\Grp)
\longrightarrow\operatorname{Arr}(\Grp)
\]
is an endofunctor.

The map $\eta_G$ induces
\[
\eta_f\colon F_f(G)\longrightarrow G,
\qquad
\eta_f\pi_f=\eta_G.
\]
For every $f\colon\Gamma\to G$, the square
\[
\begin{tikzcd}
\overline F(\Gamma)
    \arrow[r,"\rho_f"]
    \arrow[d,"\overline\eta_\Gamma"']
&
F_f(G)
    \arrow[d,"\eta_f"]
\\
\Gamma
    \arrow[r,"f"']
&
G
\end{tikzcd}
\]
commutes. Denote this morphism of arrows by
\[
\widehat\eta_f
=
(\overline\eta_\Gamma,\eta_f)
\colon
\mathscr F(f)\longrightarrow f.
\]
For every $(a,b)\colon f_1\to f_2$ one has
\[
(a,b)\widehat\eta_{f_1}
=
\widehat\eta_{f_2}\mathscr F(a,b),
\]
so these morphisms define a natural transformation
\[
\widehat\eta\colon
\mathscr F\Longrightarrow\id_{\operatorname{Arr}(\Grp)}.
\]

\begin{theorem}\label{thm:relative-arrow-lifting}
Let $\mathcal D$ be a class of homomorphisms such that
$(\overline F,\overline\eta)$ is a $\mathcal D$-lifting functor
and $F(a)$ is an epimorphism for every $a\in\mathcal D$. Then
\[
(\mathscr F,\widehat\eta)
\]
is a $\widehat{\mathcal D,\mathcal C}$-lifting functor.
\end{theorem}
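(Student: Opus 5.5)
Let $u=(a,b)\colon f_1\to f_2$ be a morphism in $\widehat{\mathcal D,\mathcal C}$, with $f_i\colon\Gamma_i\to G_i$, $a\colon\Gamma_1\to\Gamma_2$ in $\mathcal D$ and $b\colon G_1\to G_2$ in $\mathcal C$. We must produce a morphism of arrows
\[
L=(T,R)\colon\mathscr F(f_2)\longrightarrow f_1,
\]
that is, homomorphisms $T\colon\overline F(\Gamma_2)\to\Gamma_1$ and $R\colon F_{f_2}(G_2)\to G_1$ with $f_1T=R\rho_{f_2}$. They must also satisfy the lifting identities $uL=\widehat\eta_{f_2}$ and $L\mathscr F(u)=\widehat\eta_{f_1}$, which componentwise read
\[
aT=\overline\eta_{\Gamma_2},\quad T\overline F(a)=\overline\eta_{\Gamma_1},\quad bR=\eta_{f_2},\quad R\beta_{a,b}=\eta_{f_1}.
\]
For $T$ I take a lifting of $\overline\eta$ along $a$, which exists because $(\overline F,\overline\eta)$ is $\mathcal D$-lifting; this gives the first two identities.

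For $R$ I take a lifting $S\colon F(G_2)\to G_1$ of $\eta$ along $b$, which exists since $b\in\mathcal C$. I want $S$ to factor through $\pi_{f_2}$, i.e.\ to kill $N_F(f_2)$. Since $\ker S$ is normal, it suffices that $SF(f_2)$ vanish on $H_F(\Gamma_2)$. Lemma~\ref{lem:f-extension-factorization} does not apply directly, since $b$ need not be an epimorphism and $f_2$ need not lift. Instead I use the epimorphism $F(a)$: by commutativity of the square, $F(f_2)F(a)=F(b)F(f_1)$, so
\[
SF(f_2)F(a)=SF(b)F(f_1)=\eta_{G_1}F(f_1)=f_1\eta_{\Gamma_1}.
\]
This vanishes on $H_F(\Gamma_1)$. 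Since $F(a)$ is surjective, $F(a)(H_F(\Gamma_1))\subseteq H_F(\Gamma_2)$, so to conclude I need
\[
H_F(\Gamma_2)=F(a)\bigl(H_F(\Gamma_1)\bigr).
\]
This is the step I expect to be the main obstacle. I would get it from Proposition~\ref{prop:lifting-kernel-sequence}-style reasoning applied to $\overline F$ and $T$. Take $y\in H_F(\Gamma_2)$ and write $y=F(a)(x)$. Then
\[
\overline\eta_{\Gamma_1}q_{\Gamma_1}(x)=T\overline F(a)q_{\Gamma_1}(x)=Tq_{\Gamma_2}(y)=1,
\]
and since $\overline\eta_{\Gamma_1}$ is injective, $q_{\Gamma_1}(x)=1$, i.e.\ $x\in H_F(\Gamma_1)$. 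Hence $S$ induces $R$ with $R\pi_{f_2}=S$.

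It remains to check the identities. First, $bR\pi_{f_2}=bS=\eta_{G_2}=\eta_{f_2}\pi_{f_2}$, and $\pi_{f_2}$ is onto, so $bR=\eta_{f_2}$. Second, $R\beta_{a,b}\pi_{f_1}=R\pi_{f_2}F(b)=SF(b)=\eta_{G_1}=\eta_{f_1}\pi_{f_1}$, which gives $R\beta_{a,b}=\eta_{f_1}$. Finally, $L$ must be a morphism of arrows, i.e.\ $f_1T=R\rho_{f_2}$. I would deduce this from the cube Lemma~\ref{lem:cube-diagonals} applied to the cube whose top face is $\mathscr F(f_1)\to\mathscr F(f_2)$ and whose bottom face is $f_1\to f_2$, with diagonals $T$ and $R$. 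Alternatively, precompose with the epimorphism $\overline F(a)$:
\[
R\rho_{f_2}\overline F(a)=R\beta_{a,b}\rho_{f_1}=\eta_{f_1}\rho_{f_1}=f_1\overline\eta_{\Gamma_1}=f_1T\overline F(a).
\]
Then cancel $\overline F(a)$, which is onto because $F(a)$ is.
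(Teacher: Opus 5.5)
Your proof is correct and follows the paper's argument. You choose liftings $T$ and $S$, show that $S$ kills $N_F(f_2)$, check $bR=\eta_{f_2}$ and $R\beta_{a,b}=\eta_{f_1}$ after precomposing with $\pi_{f_i}$, and obtain $f_1T=R\rho_{f_2}$ by cancelling the epimorphism $\overline F(a)$, which is exactly the paper's second use of Lemma~\ref{lem:cube-diagonals}.

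The only local difference is in the factorization step. The paper cancels $F(a)$ to get $SF(f_2)=f_1Tq_{\Gamma_2}$ directly. You instead first show $F(a)(H_F(\Gamma_1))=H_F(\Gamma_2)$, using $T\overline F(a)=\overline\eta_{\Gamma_1}$ and the injectivity of $\overline\eta_{\Gamma_1}$. Both rest on the same facts and are equally valid.
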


\begin{proof}
Let $(a,b)\colon f_1\longrightarrow f_2$
belong to $\widehat{\mathcal D,\mathcal C}$. Choose liftings
\[
T\colon\overline F(\Gamma_2)\longrightarrow\Gamma_1,
\qquad
S\colon F(G_2)\longrightarrow G_1
\]
along $a$ and $b$.

Write
\[
q_i=q_{\Gamma_i}\colon
F(\Gamma_i)\longrightarrow\overline F(\Gamma_i),
\qquad
\pi_i=\pi_{f_i}\colon
F(G_i)\longrightarrow F_{f_i}(G_i).
\]
Consider the cube
\[
\begin{tikzcd}[
  row sep=3.5em,
  column sep=3.8em,
  cells={nodes={inner sep=1pt}}
]
F(\Gamma_1)
  \arrow[rr,"F(f_1)"]
  \arrow[dd,"F(a)"' {pos=0.30}]
  \arrow[dr,"\eta_{\Gamma_1}" {pos=0.28}]
&&
F(G_1)
  \arrow[dd,"F(b)" {pos=0.24}]
  \arrow[dr,"\eta_{G_1}" {pos=0.28}]
&
\\
&
\Gamma_1
  \arrow[rr,"f_1" {pos=0.72}]
  \arrow[dd,"a"' {pos=0.24}]
&&
G_1
  \arrow[dd,"b" {pos=0.66}]
\\
F(\Gamma_2)
  \arrow[rr,"F(f_2)"' {pos=0.72}]
  \arrow[dr,"\eta_{\Gamma_2}"' {pos=0.72}]
  \arrow[ur,dashed,"Tq_2" {pos=0.30}]
&&
F(G_2)
  \arrow[dr,"\eta_{G_2}"' {pos=0.72}]
  \arrow[ur,dashed,"S" {pos=0.30}]
&
\\
&
\Gamma_2
  \arrow[rr,"f_2"']
&&
G_2 .
\end{tikzcd}
\]
The dashed side faces commute because $T$ and $S$ are liftings.
Since $F(a)$ is an epimorphism,
Lemma~\ref{lem:cube-diagonals} gives
\[
SF(f_2)=f_1Tq_2.
\tag{1}
\]
Hence $S$ vanishes on $F(f_2)(H_F(\Gamma_2))$, and therefore on
$N_F(f_2)$. Thus it induces
\[
\overline S\colon F_{f_2}(G_2)\longrightarrow G_1,
\qquad
\overline S\pi_2=S.
\]

Write
\[
\beta=\beta_{a,b}\colon
F_{f_1}(G_1)\longrightarrow F_{f_2}(G_2).
\]
The lifting identities imply
\[
b\overline S=\eta_{f_2},
\qquad
\overline S\beta=\eta_{f_1}.
\]
Indeed, both equalities follow after precomposition with the
corresponding epimorphism $\pi_i$. We therefore have the cube
\[
\begin{tikzcd}[
  row sep=3.5em,
  column sep=3.8em,
  cells={nodes={inner sep=1pt}}
]
\overline F(\Gamma_1)
  \arrow[rr,"\rho_{f_1}"]
  \arrow[dd,"\overline F(a)"' {pos=0.30}]
  \arrow[dr,"\overline\eta_{\Gamma_1}" {pos=0.28}]
&&
F_{f_1}(G_1)
  \arrow[dd,"\beta" {pos=0.24}]
  \arrow[dr,"\eta_{f_1}" {pos=0.28}]
&
\\
&
\Gamma_1
  \arrow[rr,"f_1" {pos=0.72}]
  \arrow[dd,"a"' {pos=0.24}]
&&
G_1
  \arrow[dd,"b" {pos=0.66}]
\\
\overline F(\Gamma_2)
  \arrow[rr,"\rho_{f_2}"' {pos=0.72}]
  \arrow[dr,"\overline\eta_{\Gamma_2}"' {pos=0.72}]
  \arrow[ur,dashed,"T" {pos=0.30}]
&&
F_{f_2}(G_2)
  \arrow[dr,"\eta_{f_2}"' {pos=0.72}]
  \arrow[ur,dashed,"\overline S" {pos=0.30}]
&
\\
&
\Gamma_2
  \arrow[rr,"f_2"']
&&
G_2 .
\end{tikzcd}
\]
Since $F(a)$ is an epimorphism, so is $\overline F(a)$.
Lemma~\ref{lem:cube-diagonals} gives
\[
\overline S\rho_{f_2}=f_1T.
\]
Thus
\[
(T,\overline S)\colon\mathscr F(f_2)\longrightarrow f_1
\]
is a morphism of arrows. The two dashed side faces give
\[
(a,b)(T,\overline S)=\widehat\eta_{f_2},
\qquad
(T,\overline S)\mathscr F(a,b)=\widehat\eta_{f_1}.
\]
Hence $(T,\overline S)$ is the required lifting.
\end{proof}

\subsection{Morphisms of $f$-extensions}
We first describe the largest class to
which the preceding theorem applies. Put
\[
\mathcal D_F=
\left\{
a\colon E\longrightarrow G
\ \middle|\
F(a)\text{ is an epimorphism and }
\ker a\cap\operatorname{Im}\eta_E=1
\right\}.
\]

\begin{proposition}\label{prop:maximal-D}
The pair $(\overline F,\overline\eta)$ is a
$\mathcal D_F$-lifting functor. Moreover, if $\mathcal D$ is a
class such that $(\overline F,\overline\eta)$ is a
$\mathcal D$-lifting functor and $F(a)$ is an epimorphism for
every $a\in\mathcal D$, then
\[
\mathcal D\subseteq\mathcal D_F.
\]
Consequently,
\[
(\mathscr F,\widehat\eta)
\]
is a $\widehat{\mathcal D_F,\mathcal C}$-lifting functor.
\end{proposition}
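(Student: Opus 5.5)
The plan is to verify the two assertions directly from the definitions, and then deduce the last one from Theorem~\ref{thm:relative-arrow-lifting}. The key observation is that for $a\colon E\to G$ with $F(a)$ an epimorphism, naturality of $\eta$ gives
\[
\operatorname{Im}\eta_G=\eta_G\bigl(F(a)(F(E))\bigr)=a\bigl(\eta_E(F(E))\bigr)=a(\operatorname{Im}\eta_E).
\]
So $a$ always restricts to an epimorphism $a|\colon\operatorname{Im}\eta_E\to\operatorname{Im}\eta_G$. The condition $\ker a\cap\operatorname{Im}\eta_E=1$ says exactly that this restriction is an isomorphism.

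\emph{First assertion.} For $a\in\mathcal D_F$, I will define
\[
T=(a|)^{-1}\,\overline\eta_G\colon\overline F(G)\longrightarrow\operatorname{Im}\eta_E\subseteq E,
\]
using the identification $\overline F(G)\cong\operatorname{Im}\eta_G$ via the monomorphism $\overline\eta_G$.
The equality $aT=\overline\eta_G$ is immediate.
For the second identity, naturality of $\overline\eta$ gives $\overline\eta_G\overline F(a)=a\,\overline\eta_E$. Hence
\[
T\,\overline F(a)=(a|)^{-1}a\,\overline\eta_E=\overline\eta_E,
\]
since $\overline\eta_E$ takes values in $\operatorname{Im}\eta_E$. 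So $T$ is a lifting of $\overline\eta$ along $a$.

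\emph{Maximality.} Let $a\in\mathcal D$, with lifting $T$, and take $x\in\ker a\cap\operatorname{Im}\eta_E$. Write $x=\eta_E(y)=\overline\eta_E(q_Ey)$ with $y\in F(E)$. Then
\[
x=T\,\overline F(a)\,q_E(y)=T\,q_G\,F(a)(y).
\]
On the other hand,
\[
\eta_G F(a)(y)=a\,\eta_E(y)=a(x)=1,
\]
so $F(a)(y)\in H_F(G)$. Therefore $q_GF(a)(y)=1$, and hence $x=1$. Together with the standing hypothesis that $F(a)$ is an epimorphism, this gives $a\in\mathcal D_F$.

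\emph{Last assertion.} Every $a\in\mathcal D_F$ has $F(a)$ epimorphic by definition, and $(\overline F,\overline\eta)$ is $\mathcal D_F$-lifting by the first part. So Theorem~\ref{thm:relative-arrow-lifting} applies with $\mathcal D=\mathcal D_F$.

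No step is a real obstacle. The only point needing care is the identity $\operatorname{Im}\eta_G=a(\operatorname{Im}\eta_E)$, which is where surjectivity of $F(a)$ enters. It makes $(a|)^{-1}$ defined on all of $\operatorname{Im}\eta_G$, so that $T$ is well defined.
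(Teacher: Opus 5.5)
Your proof is correct and follows the paper's argument. Your $T=(a|)^{-1}\overline\eta_G$ is exactly the paper's $\overline\eta_E\,\overline F(a)^{-1}$ under the identifications $\overline F(E)\cong\operatorname{Im}\eta_E$ and $\overline F(G)\cong\operatorname{Im}\eta_G$. Your element chase for maximality unpacks the paper's observation that $T\overline F(a)=\overline\eta_E$ forces $\overline F(a)$ to be injective; you also make explicit the identity $\operatorname{Im}\eta_G=a(\operatorname{Im}\eta_E)$, which the paper uses tacitly when it says $\overline F(a)$ is the restriction of $a$.
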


\begin{proof}
Let $a\colon E\to G$ belong to $\mathcal D_F$. Since $F(a)$ is
an epimorphism, so is
\[
\overline F(a)\colon\overline F(E)\longrightarrow\overline F(G).
\]
Under the identifications
\[
\overline F(E)\cong\operatorname{Im}\eta_E,
\qquad
\overline F(G)\cong\operatorname{Im}\eta_G,
\]
the map $\overline F(a)$ is the restriction of $a$. The condition
\[
\ker a\cap\operatorname{Im}\eta_E=1
\]
therefore makes $\overline F(a)$ an isomorphism. Define
\[
T=\overline\eta_E\,\overline F(a)^{-1}
\colon\overline F(G)\longrightarrow E.
\]
Then
\[
aT=\overline\eta_G,
\qquad
T\overline F(a)=\overline\eta_E,
\]
so $T$ is a lifting along $a$.

Conversely, let $a\in\mathcal D$ and let
\[
T\colon\overline F(G)\longrightarrow E
\]
be a lifting. Then
\[
T\overline F(a)=\overline\eta_E.
\]
Since $\overline\eta_E$ is injective, $\overline F(a)$ is
injective. It is also an epimorphism because $F(a)$ is one. Hence
$\overline F(a)$ is an isomorphism, and therefore
\[
\ker a\cap\operatorname{Im}\eta_E=1.
\]
Thus $a\in\mathcal D_F$. The last assertion follows from
Theorem~\ref{thm:relative-arrow-lifting}.
\end{proof}

Every isomorphism belongs to $\mathcal D_F$. Hence
$(\mathscr F,\widehat\eta)$ is, in particular, a
\[
\widehat{\operatorname{Iso},\mathcal C}
\text{-lifting functor}.
\]

Let
\[
p\colon\widetilde G\longrightarrow G
\]
be an $f$-extension in $\mathcal C$, defined by a map
\[
\widetilde f\colon\Gamma\longrightarrow\widetilde G.
\]
Then $(\id_\Gamma, p)$ belongs to $\widehat{\operatorname{Iso},\mathcal C}$. Thus the
arrow lifting theorem gives more than the epimorphism of
Theorem~\ref{thm:relative-lifting}: it gives a morphism of the
corresponding extensions.

\begin{corollary}\label{cor:morphism-f-extensions}
Let $p\colon\widetilde G\to G$ be an $f$-extension belonging to
$\mathcal C$, defined by a map
$\widetilde f\colon\Gamma\to\widetilde G$. Then there is a
morphism of $(f\overline\eta_\Gamma)$-extensions
\[
\begin{tikzcd}[
  row sep=3.0em,
  column sep=2.5em,
  cells={nodes={inner sep=1pt}}
]
1 \arrow[r]
&
H_F(f)
  \arrow[r]
  \arrow[dd,two heads]
&
F_f(G)
  \arrow[rr,"\eta_f"]
  \arrow[dd,two heads,"\overline S"']
&&
\operatorname{Im}\eta_G
  \arrow[r]
  \arrow[dd,equal]
&
1
\\
&&&
\overline F(\Gamma)
  \arrow[ul,"\rho_f"]
  \arrow[ur,"f\overline\eta_\Gamma"']
  \arrow[dl,"\widetilde f\,\overline\eta_\Gamma"']
  \arrow[dr,"f\overline\eta_\Gamma"]
&&
\\
1 \arrow[r]
&
\ker p\cap\operatorname{Im}\eta_{\widetilde G}
  \arrow[r]
&
\operatorname{Im}\eta_{\widetilde G}
  \arrow[rr,"p"']
&&
\operatorname{Im}\eta_G
  \arrow[r]
&
1 .
\end{tikzcd}
\]
In particular,
\[
H_F(f)\longrightarrow
\ker p\cap\operatorname{Im}\eta_{\widetilde G}
\]
is an epimorphism.
\end{corollary}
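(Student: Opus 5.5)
We apply Theorem~\ref{thm:relative-arrow-lifting}, via Proposition~\ref{prop:maximal-D}, to the morphism of arrows
\[
(\id_\Gamma,p)\colon \widetilde f\longrightarrow f .
\]
This square commutes because $p\widetilde f=f$, and the morphism lies in $\widehat{\operatorname{Iso},\mathcal C}\subseteq\widehat{\mathcal D_F,\mathcal C}$. The theorem yields a lifting $L=(T,\overline S)\colon\mathscr F(f)\to\widetilde f$ with $(\id,p)L=\widehat\eta_f$. Since $\id_\Gamma$ is an isomorphism, $T$ is forced to be $\overline\eta_\Gamma$; indeed, from the proof of Proposition~\ref{prop:maximal-D}, $T=\overline\eta_\Gamma\overline F(\id)^{-1}=\overline\eta_\Gamma$. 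The second component $\overline S\colon F_f(G)\to\widetilde G$ is exactly the map induced by a lifting $S$ as in Lemma~\ref{lem:f-extension-factorization}. The identity $(\id,p)L=\widehat\eta_f$ then gives $p\overline S=\eta_f$. The fact that $L$ is a morphism of arrows gives $\overline S\rho_f=\widetilde f\,\overline\eta_\Gamma$.

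Next we read off the diagram. By Theorem~\ref{thm:relative-lifting}, $\overline S$ has image exactly $\operatorname{Im}\eta_{\widetilde G}$, so it is an epimorphism onto the middle term of the bottom row. The bottom row is exact: $p$ maps $\operatorname{Im}\eta_{\widetilde G}=\eta_{\widetilde G}(F(\widetilde G))$ onto $\eta_G(F(p)(F(\widetilde G)))$, and this equals $\operatorname{Im}\eta_G$ by naturality of $\eta$ and surjectivity of $F(p)$. Its kernel is $\ker p\cap\operatorname{Im}\eta_{\widetilde G}$. The top row is the exact sequence of Proposition~\ref{prop:relative-exact-sequence}. The right square commutes since $p\overline S=\eta_f$. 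The triangles involving $\overline F(\Gamma)$ commute by $\eta_f\rho_f=f\overline\eta_\Gamma$ (the commuting square defining $\widehat\eta_f$), by $\overline S\rho_f=\widetilde f\overline\eta_\Gamma$, and by $p\widetilde f=f$. Hence both rows are $(f\overline\eta_\Gamma)$-extensions and $\overline S$ is a morphism between them. The left vertical map is the restriction of $\overline S$ to $H_F(f)=\ker\eta_f$; it lands in $\ker p$ because $p\overline S=\eta_f$.

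The only step that needs care is the surjectivity of the restriction $H_F(f)\to\ker p\cap\operatorname{Im}\eta_{\widetilde G}$. We argue by a diagram chase, using that the right vertical map is the identity. Let $z\in\ker p\cap\operatorname{Im}\eta_{\widetilde G}$. Choose $x\in F_f(G)$ with $\overline S(x)=z$, which is possible since $\overline S$ is onto. Then
\[
\eta_f(x)=p\overline S(x)=p(z)=1,
\]
so $x\in H_F(f)$. Thus the left map is onto. This is the five-lemma-type argument for a map of extensions that is the identity on quotients. Alternatively, surjectivity follows directly from Proposition~\ref{prop:lifting-kernel-sequence} after passing to the quotient by $N_F(f)$.
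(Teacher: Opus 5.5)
Your proof is correct and follows the paper's route: you apply Theorem~\ref{thm:relative-arrow-lifting} to $(\id_\Gamma,p)\colon\widetilde f\to f$, read off $p\overline S=\eta_f$ and $\overline S\rho_f=\widetilde f\,\overline\eta_\Gamma$, use $\operatorname{Im}\overline S=\operatorname{Im}\eta_{\widetilde G}$, and get surjectivity on kernels by the chase you give; you also check exactness of the bottom row, which the paper leaves implicit. Only your closing ``alternatively'' remark is wrong: Proposition~\ref{prop:lifting-kernel-sequence} gives exactness at $H_F(G)$, which describes the kernel of $S|_{H_F(G)}$ and says nothing about surjectivity onto $\ker p\cap\operatorname{Im}\eta_{\widetilde G}$, so drop it and rely on the chase.
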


\begin{proof}
Theorem~\ref{thm:relative-arrow-lifting}, applied to the square
\[
\begin{tikzcd}
\Gamma \arrow[r,"\widetilde f"] \arrow[d,equal]
& \widetilde G \arrow[d,"p"]\\
\Gamma \arrow[r,"f"']
& G
\end{tikzcd}
\]
gives
\[
(\overline\eta_\Gamma,\overline S)\colon
\mathscr F(f)\longrightarrow\widetilde f
\]
with
\[
p\overline S=\eta_f,
\qquad
\overline S\rho_f
=\widetilde f\,\overline\eta_\Gamma.
\]
Thus the displayed diagram commutes.

The map $\overline S$ is induced by a lifting
$S\colon F(G)\to\widetilde G$. Since $F(p)$ is an epimorphism
and
\[
SF(p)=\eta_{\widetilde G},
\]
one has
\[
\operatorname{Im}\overline S
=\operatorname{Im}S
=\operatorname{Im}\eta_{\widetilde G}.
\]
Hence the middle vertical map is an epimorphism. The rows are
exact and the right vertical map is the identity, so the induced
map on the kernels is also an epimorphism.
\end{proof}

\subsection{A construction from two lifting functors}

There is a related construction for two lifting functors
connected by a natural transformation. Let
\[
\eta^1\colon F_1\Longrightarrow\id_{\Grp},
\qquad
\eta^2\colon F_2\Longrightarrow\id_{\Grp}
\]
be natural transformations, and let
\[
\alpha\colon F_1\Longrightarrow F_2
\]
satisfy
\[
\eta^2\alpha=\eta^1.
\]
For $f\colon\Gamma\to G$, put
\[
\rho_f^\alpha=F_2(f)\alpha_\Gamma, \qquad
\mathscr F_\alpha(f)=
\left(
F_1(\Gamma)\xrightarrow{\rho_f^\alpha}F_2(G)
\right).
\]
By naturality of $\alpha$,
\[
\rho_f^\alpha=\alpha_GF_1(f).
\]
For $(a,b)\colon f_1\to f_2$, define
\[
\mathscr F_\alpha(a,b)=(F_1(a),F_2(b)).
\]
The square
\[
\begin{tikzcd}
F_1(\Gamma)
    \arrow[r,"\rho_f^\alpha"]
    \arrow[d,"\eta^1_\Gamma"']
&
F_2(G)
    \arrow[d,"\eta^2_G"]
\\
\Gamma \arrow[r,"f"']
& G
\end{tikzcd}
\]
commutes. These squares define a natural transformation
\[
\widehat\eta\colon
\mathscr F_\alpha\Longrightarrow\id_{\operatorname{Arr}(\Grp)}.
\]

\begin{theorem}\label{thm:arrow-lifting-natural-transformation}
Suppose that $(F_1,\eta^1)$ is a $\mathcal D$-lifting functor,
$(F_2,\eta^2)$ is a $\mathcal C$-lifting functor, and $F_1(a)$
is an epimorphism for every $a\in\mathcal D$. Then
\[
(\mathscr F_\alpha,\widehat\eta)
\]
is a $\widehat{\mathcal D,\mathcal C}$-lifting functor.
\end{theorem}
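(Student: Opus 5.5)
Given $(a,b)\colon f_1\to f_2$ in $\widehat{\mathcal D,\mathcal C}$, with $f_i\colon\Gamma_i\to G_i$, I would build the lifting from the two individual liftings, following the proof of Theorem~\ref{thm:relative-arrow-lifting} but without passing to any quotient. Choose
\[
T\colon F_1(\Gamma_2)\longrightarrow\Gamma_1,
\qquad
S\colon F_2(G_2)\longrightarrow G_1,
\]
liftings of $\eta^1$ along $a$ and of $\eta^2$ along $b$. The candidate lifting is the pair
\[
L=(T,S)\colon\mathscr F_\alpha(f_2)\longrightarrow f_1.
\]

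First I would check that $L$ is a morphism of arrows, i.e.\ $S\rho^\alpha_{f_2}=f_1T$. This is the only non-formal step, and I would get it from Lemma~\ref{lem:cube-diagonals}. Take the cube whose top face is $(F_1(a),F_2(b))\colon\rho^\alpha_{f_1}\to\rho^\alpha_{f_2}$, with back face the square $\rho^\alpha_{f_1},\rho^\alpha_{f_2}$ over $F_1(a),F_2(b)$ and front face $f_1,f_2$ over $a,b$. The slanted edges are $\eta^1_{\Gamma_i}$ on the left and $\eta^2_{G_i}$ on the right, and the dashed diagonals are $T$ and $S$. The cube commutes by naturality of $\alpha$, $\eta^1$, $\eta^2$ and the relation $\eta^2\alpha=\eta^1$. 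The left dashed face commutes because $aT=\eta^1_{\Gamma_2}$ and $TF_1(a)=\eta^1_{\Gamma_1}$. The right dashed face commutes because $bS=\eta^2_{G_2}$ and $SF_2(b)=\eta^2_{G_1}$. Since $F_1(a)$ is an epimorphism, the lemma gives the required identity. Concretely, precompose with $F_1(a)$ and use $\rho^\alpha_{f_2}F_1(a)=F_2(b)\rho^\alpha_{f_1}$:
\[
S\rho^\alpha_{f_2}F_1(a)=SF_2(b)\alpha_{G_1}F_1(f_1)=\eta^2_{G_1}\alpha_{G_1}F_1(f_1)=\eta^1_{G_1}F_1(f_1)=f_1\eta^1_{\Gamma_1}=f_1TF_1(a),
\]
and cancel $F_1(a)$.

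Second, I would verify the two lifting identities componentwise:
\begin{itemize}
\item $(a,b)(T,S)=(aT,bS)=(\eta^1_{\Gamma_2},\eta^2_{G_2})=\widehat\eta_{f_2}$;
\item $(T,S)\mathscr F_\alpha(a,b)=(TF_1(a),SF_2(b))=(\eta^1_{\Gamma_1},\eta^2_{G_1})=\widehat\eta_{f_1}$.
\end{itemize}
Both hold directly from the defining properties of $T$ and $S$.

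The main point needing care is the morphism-of-arrows identity in the first step. There the epimorphism hypothesis on $F_1(a)$ is essential, and the compatibility $\eta^2\alpha=\eta^1$ is what connects the two functors. Everything else is bookkeeping.
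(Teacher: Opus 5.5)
Your proposal is correct and follows essentially the same route as the paper. You choose the liftings $(T,S)$, obtain $S\rho^\alpha_{f_2}=f_1T$ from Lemma~\ref{lem:cube-diagonals} using that $F_1(a)$ is an epimorphism, and read off the two lifting identities from the side faces. Your explicit cancellation computation, via $\eta^2\alpha=\eta^1$ and naturality, spells out the commutativity of the cube that the paper leaves implicit.
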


\begin{proof}
Let $(a,b)\colon f_1\to f_2$ belong to
$\widehat{\mathcal D,\mathcal C}$. Choose liftings
\[
T\colon F_1(\Gamma_2)\longrightarrow\Gamma_1,
\qquad
S\colon F_2(G_2)\longrightarrow G_1
\]
along $a$ and $b$. They form the dashed diagonals in the cube
\[
\begin{tikzcd}[
  row sep=3.5em,
  column sep=3.8em,
  cells={nodes={inner sep=1pt}}
]
F_1(\Gamma_1)
  \arrow[rr,"\rho_{f_1}^\alpha"]
  \arrow[dd,"F_1(a)"' {pos=0.30}]
  \arrow[dr,"\eta^1_{\Gamma_1}" {pos=0.28}]
&&
F_2(G_1)
  \arrow[dd,"F_2(b)" {pos=0.24}]
  \arrow[dr,"\eta^2_{G_1}" {pos=0.28}]
&
\\
&
\Gamma_1
  \arrow[rr,"f_1" {pos=0.72}]
  \arrow[dd,"a"' {pos=0.24}]
&&
G_1
  \arrow[dd,"b" {pos=0.66}]
\\
F_1(\Gamma_2)
  \arrow[rr,"\rho_{f_2}^\alpha"' {pos=0.72}]
  \arrow[dr,"\eta^1_{\Gamma_2}"' {pos=0.72}]
  \arrow[ur,dashed,"T" {pos=0.30}]
&&
F_2(G_2)
  \arrow[dr,"\eta^2_{G_2}"' {pos=0.72}]
  \arrow[ur,dashed,"S" {pos=0.30}]
&
\\
&
\Gamma_2
  \arrow[rr,"f_2"']
&&
G_2 .
\end{tikzcd}
\]
Since $F_1(a)$ is an epimorphism,
Lemma~\ref{lem:cube-diagonals} gives
\[
S\rho_{f_2}^\alpha=f_1T.
\]
Thus $(T,S)\colon\mathscr F_\alpha(f_2)\to f_1$ is a morphism of
arrows. The two dashed side faces give
\[
(a,b)(T,S)=\widehat\eta_{f_2},
\qquad
(T,S)\mathscr F_\alpha(a,b)=\widehat\eta_{f_1}.
\]
\end{proof}

\subsection{Universal lifting quotients}

Let $\mathcal B(G)$ be a nonempty class of morphisms
\[
p\colon E\longrightarrow G
\]
belonging to $\mathcal C$. Consider all liftings
\[
S\colon F(G)\longrightarrow E
\]
along morphisms in $\mathcal B(G)$, and put
\[
A_{\mathcal B}(G)=\bigcap_{(p,S)}\ker S.
\]
The intersection is taken over all such pairs $(p,S)$. Define
\[
F_{\mathcal B}(G)=\frac{F(G)}{A_{\mathcal B}(G)}.
\]
Since $pS=\eta_G$, one has
\[
\ker S\subseteq H_F(G)
\]
for every lifting $S$. Hence
\[
A_{\mathcal B}(G)\subseteq H_F(G),
\]
and $\eta_G$ induces a homomorphism
\[
\eta_{\mathcal B,G}\colon F_{\mathcal B}(G)\longrightarrow G.
\]

\begin{proposition}\label{prop:universal-lifting-quotient}
The group $F_{\mathcal B}(G)$ has the following universal
property.
\begin{enumerate}
\item Every lifting associated with a morphism in
$\mathcal B(G)$ factors through $F_{\mathcal B}(G)$.

\item Let
\[
q\colon F(G)\longrightarrow Q
\]
be an epimorphism through which every such lifting factors. Then
there is a unique epimorphism
\[
Q\longrightarrow F_{\mathcal B}(G)
\]
compatible with the quotient maps.
\end{enumerate}
\end{proposition}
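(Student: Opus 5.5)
The plan is to work only with kernels of homomorphisms out of $F(G)$, since both parts reduce to comparing normal subgroups. We use the standard fact that a homomorphism $\phi\colon F(G)\to X$ factors through an epimorphism $q\colon F(G)\to Q$ if and only if $\ker q\subseteq\ker\phi$. When it does, the factorization is unique because $q$ is surjective.

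For part (1), let $p\colon E\to G$ belong to $\mathcal B(G)$ and let $S$ be a lifting along $p$. By definition, $A_{\mathcal B}(G)$ is the intersection of the kernels of all such liftings. Hence $A_{\mathcal B}(G)\subseteq\ker S$. It follows that $S$ induces $\overline S\colon F_{\mathcal B}(G)\to E$ with $\overline S\,\pi=S$, where $\pi\colon F(G)\to F_{\mathcal B}(G)$ is the quotient map. This is the same argument as in Lemma~\ref{lem:f-extension-factorization}, where $N_F(f)\subseteq\ker S$ gave $\overline S$.

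For part (2), let $q\colon F(G)\to Q$ be an epimorphism through which every lifting $S$ along a morphism of $\mathcal B(G)$ factors. By the fact above, $\ker q\subseteq\ker S$ for every pair $(p,S)$, so $\ker q\subseteq A_{\mathcal B}(G)$. Applying the fact again, with $\pi$ in the role of $\phi$, the map $\pi$ factors through $q$. This gives $r\colon Q\to F_{\mathcal B}(G)$ with $rq=\pi$. The map $r$ is surjective because $\pi$ is, and it is unique because $q$ is an epimorphism. We could add that $F_{\mathcal B}(G)$ is therefore the smallest such quotient, i.e.\ terminal among them, but the statement does not require it.

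No step is a real obstacle. The only point needing care is that the intersection defining $A_{\mathcal B}(G)$ is taken over a nonempty family, so that it is a genuine normal subgroup of $F(G)$ and not all of $F(G)$ by convention. The hypothesis that $\mathcal B(G)$ is nonempty guarantees the family of pairs $(p,S)$ is nonempty: since $\mathcal B(G)\subseteq\mathcal C$, every $p$ in it admits a lifting $S$ by Definition~\ref{def:C-lifting-functor}. Normality holds because each $\ker S$ is normal and an intersection of normal subgroups is normal.
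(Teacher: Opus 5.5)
Your proof is correct and follows the paper's argument: part (1) comes from $A_{\mathcal B}(G)\subseteq\ker S$, and part (2) from $\ker q\subseteq\ker S$ for every pair $(p,S)$, so $\ker q\subseteq A_{\mathcal B}(G)$, with uniqueness because $q$ is an epimorphism. Your closing remark about nonemptiness is harmless but not needed: an empty intersection would simply be $F(G)$, and the same argument would still go through.
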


\begin{proof}
The first assertion follows from
\[
A_{\mathcal B}(G)\subseteq\ker S
\]
for every lifting $S$. For the second assertion, put $K=\ker q$.
Since every lifting factors through $q$,
\[
K\subseteq\ker S
\]
for every pair $(p,S)$. Hence
\[
K\subseteq A_{\mathcal B}(G),
\]
which gives the required epimorphism. Uniqueness follows because
$q$ is an epimorphism.
\end{proof}

Thus $F_{\mathcal B}(G)$ is the smallest quotient of $F(G)$
through which all liftings associated with $\mathcal B(G)$
factor.

\begin{proposition}\label{prop:universal-quotient-isomorphism}
Suppose that the classes $\mathcal B(G)$ are preserved under
isomorphisms of their codomains. If $G_1\cong G_2$, then
\[
F_{\mathcal B}(G_1)\cong F_{\mathcal B}(G_2).
\]
\end{proposition}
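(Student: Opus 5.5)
Fix an isomorphism $\phi\colon G_1\to G_2$. The plan is to show that the induced isomorphism $F(\phi)\colon F(G_1)\to F(G_2)$ carries $A_{\mathcal B}(G_1)$ onto $A_{\mathcal B}(G_2)$; it then descends to an isomorphism of the quotients $F_{\mathcal B}(G_1)\cong F_{\mathcal B}(G_2)$. Since $F$ is a functor, $F(\phi)$ is an isomorphism with inverse $F(\phi^{-1})$. By symmetry (swap the roles via $\phi^{-1}$), it suffices to prove the single inclusion
\[
F(\phi)\bigl(A_{\mathcal B}(G_1)\bigr)\subseteq A_{\mathcal B}(G_2).
\]

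The core step is to set up a bijection between lifting pairs over $G_1$ and over $G_2$. Let $(p,S)$ be a pair with $p\colon E\to G_2$ in $\mathcal B(G_2)$ and $S\colon F(G_2)\to E$ a lifting along $p$. Here we interpret the hypothesis ``preserved under isomorphisms of codomains'' as saying that $\phi^{-1}p\colon E\to G_1$ belongs to $\mathcal B(G_1)$. Put
\[
S'=S\,F(\phi)\colon F(G_1)\to E.
\]
Check that $S'$ is a lifting along $p'=\phi^{-1}p$. First,
\[
p'S'=\phi^{-1}pSF(\phi)=\phi^{-1}\eta_{G_2}F(\phi)=\phi^{-1}\phi\,\eta_{G_1}=\eta_{G_1},
\]
by naturality of $\eta$. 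Second,
\[
S'F(p')=SF(\phi)F(\phi^{-1})F(p)=SF(p)=\eta_E,
\]
by functoriality.

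Now take $x\in A_{\mathcal B}(G_1)$. For every such $(p,S)$ we have $S'(x)=1$, that is, $S(F(\phi)(x))=1$. Hence $F(\phi)(x)$ lies in $\ker S$ for every pair $(p,S)$ over $G_2$, and so $F(\phi)(x)\in A_{\mathcal B}(G_2)$. This proves the inclusion, and the symmetric argument gives equality. The isomorphism is then the map $F_{\mathcal B}(G_1)\to F_{\mathcal B}(G_2)$ induced by $F(\phi)$, with inverse induced by $F(\phi^{-1})$. Moreover, it commutes with $\eta_{\mathcal B}$ up to $\phi$.

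The only delicate point is the precise meaning of the hypothesis on $\mathcal B$. We need both that $\phi^{-1}p\in\mathcal B(G_1)$ whenever $p\in\mathcal B(G_2)$, and the converse, and also that these composites stay in $\mathcal C$. Everything else is a routine diagram check.
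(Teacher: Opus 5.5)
Your proof is correct and is essentially the paper's argument. You transport lifting pairs along the isomorphism via $(p,S)\mapsto(\phi^{-1}p,\,SF(\phi))$, where the paper uses the mirror-image transport $(p,S)\mapsto(\phi p,\,SF(\phi^{-1}))$ with $\ker(SF(\phi^{-1}))=F(\phi)(\ker S)$, and both conclude $F(\phi)(A_{\mathcal B}(G_1))=A_{\mathcal B}(G_2)$ by symmetry. Your remaining worry about the composites staying in $\mathcal C$ is automatic, since each $\mathcal B(G)$ is by definition a class of morphisms in $\mathcal C$.
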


\begin{proof}
Let $\phi\colon G_1\to G_2$ be an isomorphism. If
$p\colon E\to G_1$ belongs to $\mathcal B(G_1)$ and
$S\colon F(G_1)\to E$ is a lifting, then
\[
SF(\phi^{-1})\colon F(G_2)\longrightarrow E
\]
is a lifting along $\phi p$. Moreover,
\[
\ker\bigl(SF(\phi^{-1})\bigr)=F(\phi)(\ker S).
\]
Applying the same argument to $\phi^{-1}$ gives
\[
F(\phi)\bigl(A_{\mathcal B}(G_1)\bigr)
=A_{\mathcal B}(G_2),
\]
and therefore the required isomorphism.
\end{proof}

Let $\mathcal B_f(G)$ be a nonempty class of $f$-extensions of
$G$ belonging to $\mathcal C$.

\begin{proposition}\label{prop:relative-universal-comparison}
There is a canonical epimorphism
\[
F_f(G)\longrightarrow F_{\mathcal B_f}(G).
\]
\end{proposition}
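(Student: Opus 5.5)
The plan is to show the inclusion $N_F(f)\subseteq A_{\mathcal B_f}(G)$. Once it holds, the identity of $F(G)$ induces a well-defined epimorphism
\[
F_f(G)=\frac{F(G)}{N_F(f)}\longrightarrow\frac{F(G)}{A_{\mathcal B_f}(G)}=F_{\mathcal B_f}(G),
\]
compatible with the quotient maps $\pi_f$ and $F(G)\to F_{\mathcal B_f}(G)$. It is surjective because both groups are quotients of $F(G)$.

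By definition, $A_{\mathcal B_f}(G)$ is the intersection of $\ker S$ over all pairs $(p,S)$, where $p\colon E\to G$ lies in $\mathcal B_f(G)$ and $S$ is a lifting of $\eta$ along $p$. It therefore suffices to check $N_F(f)\subseteq\ker S$ for each such pair. Each $p\in\mathcal B_f(G)$ is an $f$-extension belonging to $\mathcal C$, so Lemma~\ref{lem:f-extension-factorization} applies verbatim and gives exactly this inclusion. Intersecting over all pairs yields $N_F(f)\subseteq A_{\mathcal B_f}(G)$.

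An equivalent route is through Proposition~\ref{prop:universal-lifting-quotient}(2) with $q=\pi_f$. By Lemma~\ref{lem:f-extension-factorization}, every lifting along a morphism in $\mathcal B_f(G)$ factors through $\pi_f$. The universal property then produces the unique epimorphism $F_f(G)\to F_{\mathcal B_f}(G)$ compatible with the quotient maps, which justifies calling it canonical.

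There is no real obstacle. The only points to check are that the nonemptiness of $\mathcal B_f(G)$ makes $A_{\mathcal B_f}(G)$ a genuine intersection of normal subgroups of $F(G)$, and that the lemma does not require the standing assumption that $F(p)$ is epi.
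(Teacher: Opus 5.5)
Your proposal is correct and matches the paper's proof: Lemma~\ref{lem:f-extension-factorization} gives $N_F(f)\subseteq\ker S$ for every lifting along a member of $\mathcal B_f(G)$, hence $N_F(f)\subseteq A_{\mathcal B_f}(G)$, and the induced map between these two quotients of $F(G)$ is the canonical epimorphism. Your variant via Proposition~\ref{prop:universal-lifting-quotient}(2) with $q=\pi_f$ is the same argument, and it usefully justifies the word ``canonical'' through uniqueness.
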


\begin{proof}
By Lemma~\ref{lem:f-extension-factorization}, every lifting
associated with an $f$-extension is trivial on $N_F(f)$. Hence
\[
N_F(f)\subseteq A_{\mathcal B_f}(G),
\]
and the assertion follows.
\end{proof}

Thus the two relative constructions have different roles. The
group $F_f(G)$ is functorial in $f$, whereas
$F_{\mathcal B_f}(G)$ is universal for the chosen class of
$f$-extensions.

\begin{corollary}\label{cor:universal-lifting-image}
Let $p\colon E\to G$ belong to $\mathcal B(G)$. Every lifting
along $p$ induces an epimorphism
\[
F_{\mathcal B}(G)\longrightarrow\operatorname{Im}\eta_E.
\]
\end{corollary}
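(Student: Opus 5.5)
The plan is to combine the first part of Proposition~\ref{prop:universal-lifting-quotient} with the argument from the proof of Theorem~\ref{thm:relative-lifting}.

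First, fix $p\colon E\to G$ in $\mathcal B(G)$ and a lifting $S\colon F(G)\to E$ along $p$. By definition,
\[
A_{\mathcal B}(G)=\bigcap_{(p',S')}\ker S'\subseteq\ker S,
\]
since the pair $(p,S)$ is one of those in the intersection. Hence $S$ factors through the quotient map $F(G)\to F_{\mathcal B}(G)$. We obtain a homomorphism
\[
\overline S\colon F_{\mathcal B}(G)\longrightarrow E,
\]
whose image equals $\operatorname{Im}S$.

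Next, we identify this image. Since $p\in\mathcal B(G)\subseteq\mathcal C$, the standing assumption makes $F(p)$ an epimorphism. The lifting identity $SF(p)=\eta_E$ then gives
\[
\operatorname{Im}S=S\bigl(F(p)(F(E))\bigr)=\eta_E(F(E))=\operatorname{Im}\eta_E.
\]
Therefore $\overline S$, with its codomain restricted to $\operatorname{Im}\eta_E$, is the required epimorphism.

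No step is a real obstacle. The only point to check is that the standing hypothesis on $F(p)$ applies, and it does because $\mathcal B(G)$ is by definition a class of morphisms in $\mathcal C$. Note that, unlike Theorem~\ref{thm:relative-lifting}, no $f$-extension hypothesis is needed: the factorization comes directly from the definition of $A_{\mathcal B}(G)$ rather than from Lemma~\ref{lem:f-extension-factorization}.
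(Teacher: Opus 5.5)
Your proposal is correct and follows the paper's proof: you factor $S$ through $F_{\mathcal B}(G)$ via $A_{\mathcal B}(G)\subseteq\ker S$, then use that $F(p)$ is an epimorphism together with $SF(p)=\eta_E$ to identify the image as $\operatorname{Im}\eta_E$. You are also right that $F(p)$ is an epimorphism because $\mathcal B(G)\subseteq\mathcal C$, and that no $f$-extension hypothesis is needed here.
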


\begin{proof}
Let $S\colon F(G)\to E$ be a lifting. By
Proposition~\ref{prop:universal-lifting-quotient}, it factors
through $F_{\mathcal B}(G)$. Since $F(p)$ is an epimorphism,
\[
S(F(G))
=S\bigl(F(p)(F(E))\bigr)
=\eta_E(F(E)).
\]
Hence the induced homomorphism has image
$\operatorname{Im}\eta_E$.
\end{proof}

\section{Tensor and exterior powers}
\label{sec:tensor-exterior}

Let
\[
\gamma_1(G)=G,
\qquad
\gamma_{n+1}(G)=[\gamma_n(G),G],
\]
and
\[
\Gamma_1(G)=G,
\qquad
\Gamma_{n+1}(G)=[\Gamma_n(G),\Gamma_n(G)]
\]
be the lower central and derived series of $G$, respectively.

For $n\geq 1$, put
\[
\mathfrak D_n(G)=
\left\{
g\in G\ \middle|\
[\cdots[[g,x_1],x_2],\ldots,x_n]=1
\text{ for all }x_i\in\Gamma_i(G)
\right\}.
\]

For $n\geq 1$, let $G^{\otimes n}$ be the iterated
non-abelian tensor power, where
\[
G^{\otimes 1}=G,
\qquad
G^{\otimes(n+1)}=G^{\otimes n}\otimes G,
\]
with the standard compatible actions. There is a natural
homomorphism
\[
\lambda_n^G\colon G^{\otimes n}\longrightarrow G
\]
whose image is $\gamma_n(G)$.

Let
\[
G^{\wedge(1)}=G,
\qquad
G^{\wedge(n+1)}
=
G^{\wedge(n)}\wedge G^{\wedge(n)}.
\]
There is a natural homomorphism
\[
\mu_n^G\colon G^{\wedge(n)}\longrightarrow G
\]
whose image is $\Gamma_n(G)$. We refer to
\cite{DonadzeGarciaMartinez2021} for the definitions of the
actions and the natural homomorphisms.

For $n\geq2$, let $\mathcal Z_n$ be the class of
epimorphisms
\[
p\colon E\longrightarrow G
\]
such that
\[
\ker p\subseteq Z_{n-1}(E),
\]
and let $\mathcal D_n$ be the class of epimorphisms such that
\[
\ker p\subseteq\mathfrak D_{n-1}(E).
\]

\begin{proposition}\label{prop:tensor-exterior-lifting}
For every $n\geq2$:
\begin{enumerate}
\item
$(G^{\otimes n},\lambda_n)$ is a
$\mathcal Z_n$-lifting functor;

\item
$(G^{\wedge(n)},\mu_n)$ is a
$\mathcal D_n$-lifting functor;

\item
both functors send epimorphisms to epimorphisms.
\end{enumerate}
\end{proposition}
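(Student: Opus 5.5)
The plan is to argue by induction on $n$, building the lifting $S$ along $p\colon E\to G$ explicitly on generators. Part (3) comes first, since the induction uses it. Recall that a non-abelian tensor product $M\otimes N$ is generated by the symbols $m\otimes n$. If $M\to M'$ and $N\to N'$ are surjective and compatible with the actions, then the induced map $M\otimes N\to M'\otimes N'$ hits every generator $m'\otimes n'$, so it is surjective. The same holds for exterior products, which are quotients of tensor products. Induction on $n$, using $G^{\otimes(n+1)}=G^{\otimes n}\otimes G$ and $G^{\wedge(n+1)}=G^{\wedge(n)}\wedge G^{\wedge(n)}$, then shows that an epimorphism $p$ induces epimorphisms $p^{\otimes n}$ and $p^{\wedge(n)}$.

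For (1), the lifting $S_n\colon G^{\otimes n}\to E$ is defined on generators by
\[
S_n(g_1\otimes\cdots\otimes g_n)=[\cdots[\widetilde g_1,\widetilde g_2],\ldots,\widetilde g_n],
\]
where $\widetilde g_i\in E$ are arbitrary preimages. This equals $\lambda_n^E$ of the lifted tensor. The proof then has three steps.
\begin{enumerate}
\item \textbf{Independence of lifts.} Since $\ker p\subseteq Z_{n-1}(E)$, changing a lift by an element of $\ker p$ does not change the left-normed commutator of weight $n$. This is the standard fact that $[Z_{n-1}(E),E,\ldots,E]$ with $n-1$ entries is trivial. For the entry $\widetilde g_1$ one needs a three-subgroup / induction argument showing that an element of $Z_{n-1}$ placed in any position of an $n$-fold commutator kills it.
\item \textbf{Well-definedness as a homomorphism.} I expect this to be the main obstacle: one must check that the defining relations of the iterated tensor product are respected. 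I would organize the check inductively rather than verify all relations of $G^{\otimes n}$ at once. Let $E_1=p^{-1}$-pullback-style, namely the group $\gamma_{n-1}(E)$ regarded via $\lambda_{n-1}^E$. By induction there is a lifting $S_{n-1}\colon G^{\otimes(n-1)}\to E$ along $p$, since $\mathcal Z_n\subseteq\mathcal Z_{n-1}$. The key observation is that the commutator map
\[
\gamma_{n-1}(E)\times E\to E,\qquad (x,e)\mapsto[x,e],
\]
factors through $G^{\otimes(n-1)}\times G$. It depends on $e$ only modulo $\ker p$, because $[\gamma_{n-1}(E),\ker p]=1$ when $\ker p\subseteq Z_{n-1}$. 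Then $(x,g)\mapsto[S_{n-1}(x),\widetilde g]$ is a crossed pairing: it satisfies the two bilinearity identities of the tensor product, since commutators in $E$ satisfy $[xy,e]={}^x[y,e][x,e]$ and the analogue in the second variable, and the actions on $G^{\otimes(n-1)}$ and $G$ correspond under $S_{n-1}$ and $p$ to conjugation in $E$. The universal property of $G^{\otimes(n-1)}\otimes G$ then gives $S_n$.
\item \textbf{Lifting identities.} The identities $pS_n=\lambda_n^G$ and $S_n p^{\otimes n}=\lambda_n^E$ hold on generators, and hence everywhere.
\end{enumerate}

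Part (2) follows the same template. The lifting is built recursively as
\[
S_{n+1}(x\wedge y)=[S_n(x),S_n(y)]
\]
for $x,y\in G^{\wedge(n)}$, mapping into $[\Gamma_n(E),\Gamma_n(E)]$. The hypothesis $\ker p\subseteq\mathfrak D_{n}(E)$ makes the iterated commutators with entries from $\Gamma_i(E)$ insensitive to changing lifts by kernel elements. It also ensures that $S_n$ is well defined with image in $\Gamma_n(E)$ modulo nothing. The crossed-pairing relations and the exterior relation $x\wedge x=1$ hold because $[u,u]=1$ in $E$. One then gets the universal property of the exterior product, and the identities with $\mu_n$ are checked on generators. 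For this part I would also cite the constructions in \cite{DonadzeGarciaMartinez2021} for the precise form of the actions, since the compatibility of actions under $S_n$ is again the delicate point.
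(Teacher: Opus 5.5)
Your part (3) is fine, but the constructions in (1) and (2) rest on a false inclusion. You write that a lifting $S_{n-1}\colon G^{\otimes(n-1)}\to E$ along $p$ exists by induction ``since $\mathcal Z_n\subseteq\mathcal Z_{n-1}$''. In fact $Z_{n-2}(E)\subseteq Z_{n-1}(E)$, so $\mathcal Z_{n-1}\subseteq\mathcal Z_n$, and a map in $\mathcal Z_n$ need not admit any lifting at level $n-1$.

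For a concrete failure, let $E$ be non-abelian of nilpotency class $2$ and let $p\colon E\to 1$. Then $\ker p=E=Z_2(E)$, so $p\in\mathcal Z_3$, and $S_3$ exists because $\gamma_3(E)=1$. However, a lifting $S_2\colon 1\otimes 1\to E$ would force $\lambda_2^E=S_2p^{\otimes 2}=1$, that is, $[E,E]=1$. So the formula $S_n(x\otimes g)=[S_{n-1}(x),\widetilde g]$ refers to a map that need not exist, and your crossed-pairing verification has nothing to start from.

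The same problem affects (2). Since $\mathfrak D_{n-1}(E)\subseteq\mathfrak D_n(E)$, we get $\mathcal D_n\subseteq\mathcal D_{n+1}$. Hence $S_{n+1}(x\wedge y)=[S_n(x),S_n(y)]$ presupposes a lifting along $p$ that $p\in\mathcal D_{n+1}$ does not provide.

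The missing idea is an induction that controls only where the kernel goes, rather than a full lifting along the same $p$. The paper cites Lemmas~3.1 and~3.7 of \cite{DonadzeGarciaMartinez2021}: $\lambda_n^E$ vanishes on $\ker p^{\otimes n}$, and $\mu_n^E$ vanishes on $\ker p^{\wedge(n)}$. It then factors these maps through the epimorphisms from part (3).

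One way to prove the vanishing is by right exactness. The kernel $\ker p^{\otimes m}$ is normally generated by elements $k\otimes e$ and $y\otimes l$ with $k\in\ker p^{\otimes(m-1)}$, $e\in E$, $y\in E^{\otimes(m-1)}$ and $l\in\ker p$. Using $[\gamma_{m-1}(E),Z_{n-1}(E)]\subseteq Z_{n-m}(E)$, induction on $m$ gives $\lambda_m^E(\ker p^{\otimes m})\subseteq Z_{n-m}(E)$. At $m=n$ this is trivial. In the exterior case, replace $Z_{n-m}(E)$ by the normal subgroup of those $g$ with $[\cdots[g,x_m],\ldots,x_{n-1}]=1$ for all $x_i\in\Gamma_i(E)$.

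Alternatively, you can repair your own tensor argument by applying the induction hypothesis to $E/Z(E)\to G/p(Z(E))$. Its kernel lies in $Z_{n-2}(E/Z(E))$, so this map does belong to $\mathcal Z_{n-1}$. You would then use that $[\sigma(x),\widetilde g]$ depends on $\sigma(x)$ only modulo $Z(E)$.
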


\begin{proof}
Let $p\colon E\to G$ belong to $\mathcal Z_n$. The
homomorphism $\lambda_n^E$ vanishes on the kernel of
\[
p^{\otimes n}\colon E^{\otimes n}\longrightarrow G^{\otimes n}
\]
and hence induces a homomorphism
\[
S_p\colon G^{\otimes n}\longrightarrow E
\]
such that
\[
pS_p=\lambda_n^G,
\qquad
S_pp^{\otimes n}=\lambda_n^E.
\]
The exterior case is proved in the same way. These are
\cite[Lemmas~3.1 and~3.7]{DonadzeGarciaMartinez2021}, with
the indices shifted by one. Preservation of epimorphisms
follows inductively from the exact sequences for
non-abelian tensor and exterior products.
\end{proof}

Let $f\colon\Gamma\to G$ be a homomorphism. Put
\[
K_n^{\otimes}(G)=\ker\lambda_n^G
\]
and
\[
N_n^{\otimes}(f)=
\left\langle
f^{\otimes n}\bigl(K_n^{\otimes}(\Gamma)\bigr)
\right\rangle^{G^{\otimes n}}.
\]
Define
\[
G_f^{\otimes n}
=
\frac{G^{\otimes n}}{N_n^{\otimes}(f)}
\]
and
\[
M_n^{\otimes}(f)
=
\frac{K_n^{\otimes}(G)}{N_n^{\otimes}(f)}.
\]

Similarly, put
\[
K_n^{\wedge}(G)=\ker\mu_n^G,
\]
\[
N_n^{\wedge}(f)=
\left\langle
f^{\wedge(n)}\bigl(K_n^{\wedge}(\Gamma)\bigr)
\right\rangle^{G^{\wedge(n)}},
\]
and define
\[
G_f^{\wedge(n)}
=
\frac{G^{\wedge(n)}}{N_n^{\wedge}(f)},
\qquad
M_n^{\wedge}(f)
=
\frac{K_n^{\wedge}(G)}{N_n^{\wedge}(f)}.
\]

Proposition~\ref{prop:relative-exact-sequence} gives exact
sequences
\[
1\longrightarrow M_n^{\otimes}(f)
\longrightarrow G_f^{\otimes n}
\longrightarrow\gamma_n(G)
\longrightarrow1
\]
and
\[
1\longrightarrow M_n^{\wedge}(f)
\longrightarrow G_f^{\wedge(n)}
\longrightarrow\Gamma_n(G)
\longrightarrow1.
\]

\begin{theorem}\label{thm:relative-tensor-exterior}
Let $p\colon\widetilde G\longrightarrow G$ be an $f$-extension.

\begin{enumerate}
\item
If $\ker p\subseteq Z_{n-1}(\widetilde G)$,
then there is an epimorphism
\[
G_f^{\otimes n}
\longrightarrow
\gamma_n(\widetilde G).
\]

\item
If $\ker p\subseteq\mathfrak D_{n-1}(\widetilde G)$,
then there is an epimorphism
\[
G_f^{\wedge(n)}
\longrightarrow
\Gamma_n(\widetilde G).
\]
\end{enumerate}
\end{theorem}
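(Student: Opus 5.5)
The plan is to specialize Theorem~\ref{thm:relative-lifting} to the two lifting functors of Proposition~\ref{prop:tensor-exterior-lifting}. For part (1), take $F(G)=G^{\otimes n}$ and $\eta=\lambda_n$, with $\mathcal C=\mathcal Z_n$. The hypothesis $\ker p\subseteq Z_{n-1}(\widetilde G)$, together with the fact that an $f$-extension is an epimorphism, says exactly that $p\in\mathcal Z_n$. By Proposition~\ref{prop:tensor-exterior-lifting}(1), $(G^{\otimes n},\lambda_n)$ is a $\mathcal Z_n$-lifting functor, and by part~(3) the map $p^{\otimes n}$ is an epimorphism. So the standing assumption of Section~\ref{sec:lifting-functors}, that $F(p)$ is epi for $p\in\mathcal C$, holds.

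Next I check that the abstract objects match the concrete ones. We have $H_F(G)=\ker\lambda_n^G=K_n^{\otimes}(G)$ and $F(f)=f^{\otimes n}$. Hence $N_F(f)=N_n^{\otimes}(f)$ and $F_f(G)=G_f^{\otimes n}$.

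Theorem~\ref{thm:relative-lifting} then applies to the lifting $S_p$ and gives an epimorphism $G_f^{\otimes n}\to\operatorname{Im}\lambda_n^{\widetilde G}$. The target is $\gamma_n(\widetilde G)$, since the image of $\lambda_n^{\widetilde G}$ is $\gamma_n(\widetilde G)$. Concretely, Lemma~\ref{lem:f-extension-factorization} shows that $S_p$ kills $f^{\otimes n}(K_n^{\otimes}(\Gamma))$, because $S_pf^{\otimes n}=S_pp^{\otimes n}\widetilde f^{\otimes n}=\lambda_n^{\widetilde G}\widetilde f^{\otimes n}=\widetilde f\lambda_n^\Gamma$. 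Surjectivity comes from $\operatorname{Im}S_p=S_p(p^{\otimes n}(\widetilde G^{\otimes n}))=\operatorname{Im}\lambda_n^{\widetilde G}$.

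Part (2) is the same argument with $G^{\wedge(n)}$, $\mu_n$ and $\mathcal C=\mathcal D_n$. The hypothesis $\ker p\subseteq\mathfrak D_{n-1}(\widetilde G)$ means $p\in\mathcal D_n$. The exterior lifting comes from Proposition~\ref{prop:tensor-exterior-lifting}(2), and $p^{\wedge(n)}$ is epi by part~(3). The resulting epimorphism $G_f^{\wedge(n)}\to\operatorname{Im}\mu_n^{\widetilde G}=\Gamma_n(\widetilde G)$ is the claim.

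No real obstacle is expected. The only point to watch is that the standing assumption ``$F(p)$ epi'' is supplied by Proposition~\ref{prop:tensor-exterior-lifting}(3) for these particular functors; it is not automatic. It also matters that the index shift matches: $Z_{n-1}$ corresponds to $\mathcal Z_n$, and $\mathfrak D_{n-1}$ to $\mathcal D_n$.
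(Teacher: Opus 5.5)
Your proposal is correct and follows the paper's proof, which consists of the single instruction to apply Theorem~\ref{thm:relative-lifting} to the lifting functors of Proposition~\ref{prop:tensor-exterior-lifting}. You unpack that instruction faithfully: you check that $p\in\mathcal Z_n$ (respectively $p\in\mathcal D_n$), that the standing epimorphism assumption comes from part~(3), and that $N_F(f)=N_n^{\otimes}(f)$, $F_f(G)=G_f^{\otimes n}$ and $\operatorname{Im}\lambda_n^{\widetilde G}=\gamma_n(\widetilde G)$, together with their exterior analogues.
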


\begin{proof}
Apply Theorem~\ref{thm:relative-lifting} to the lifting
functors of
Proposition~\ref{prop:tensor-exterior-lifting}.
\end{proof}

\begin{corollary}[Relative Schur--Baer theorem]
\label{cor:relative-schur-baer}
Let $\mathcal P$ be a class of groups closed under extensions
and homomorphic images, and let
$p\colon\widetilde G\to G$ be an $f$-extension.

\begin{enumerate}
\item
If
\[
\ker p\subseteq Z_{n-1}(\widetilde G),
\qquad
M_n^{\otimes}(f)\in\mathcal P,
\qquad
\gamma_n(G)\in\mathcal P,
\]
then
\[
\gamma_n(\widetilde G)\in\mathcal P.
\]

\item
If
\[
\ker p\subseteq\mathfrak D_{n-1}(\widetilde G),
\qquad
M_n^{\wedge}(f)\in\mathcal P,
\qquad
\Gamma_n(G)\in\mathcal P,
\]
then
\[
\Gamma_n(\widetilde G)\in\mathcal P.
\]
\end{enumerate}
\end{corollary}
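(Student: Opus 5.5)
The statement is Corollary~\ref{cor:relative-schur-baer}. I plan to prove it as a direct specialization of Corollary~\ref{cor:general-schur-baer}, combined with the identifications already set up in this section. In part (1) I take $(F,\eta)=(G^{\otimes n},\lambda_n)$ and $\mathcal C=\mathcal Z_n$. In part (2) I take $(F,\eta)=(G^{\wedge(n)},\mu_n)$ and $\mathcal C=\mathcal D_n$. Proposition~\ref{prop:tensor-exterior-lifting} guarantees that these are lifting functors for the stated classes. It also shows they send epimorphisms to epimorphisms, so the standing assumption that $F(p)$ is an epimorphism for every $p\in\mathcal C$ holds.

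First I check that the given $f$-extension $p$ belongs to the relevant class. By definition an $f$-extension is an epimorphism. The hypothesis $\ker p\subseteq Z_{n-1}(\widetilde G)$, respectively $\ker p\subseteq\mathfrak D_{n-1}(\widetilde G)$, is exactly the condition for membership in $\mathcal Z_n$, respectively $\mathcal D_n$.

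Next I match the notation. With these choices, $H_F(G)=K_n^{\otimes}(G)$ and $N_F(f)=N_n^{\otimes}(f)$, so $H_F(f)=M_n^{\otimes}(f)$. The images are $\operatorname{Im}\lambda_n^G=\gamma_n(G)$ and $\operatorname{Im}\lambda_n^{\widetilde G}=\gamma_n(\widetilde G)$. The exterior case is analogous, with $M_n^{\wedge}(f)$ in place of $M_n^{\otimes}(f)$, and with $\Gamma_n(G)$ and $\Gamma_n(\widetilde G)$ as the images of $\mu_n$.

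Once these identifications are in place, Corollary~\ref{cor:general-schur-baer} gives the conclusion directly. Unwinding it: the exact sequence
\[
1\to M_n^{\otimes}(f)\to G_f^{\otimes n}\to\gamma_n(G)\to 1
\]
together with closure of $\mathcal P$ under extensions shows $G_f^{\otimes n}\in\mathcal P$. Then the epimorphism of Theorem~\ref{thm:relative-tensor-exterior} and closure under homomorphic images give $\gamma_n(\widetilde G)\in\mathcal P$. The exterior case is the same argument.

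The only step that needs any care is confirming the image identifications $\operatorname{Im}\lambda_n=\gamma_n$ and $\operatorname{Im}\mu_n=\Gamma_n$, and the index shift in the lifting lemmas. These were already stated when $\lambda_n$ and $\mu_n$ were introduced, so I expect no real obstacle. This also covers $n=1$ formally, where the classes are not defined, so I restrict to $n\ge2$ as in Proposition~\ref{prop:tensor-exterior-lifting}.
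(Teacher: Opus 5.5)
Your proposal is correct and matches the paper's proof. The exact sequence $1\to M_n^{\otimes}(f)\to G_f^{\otimes n}\to\gamma_n(G)\to 1$ (respectively its exterior analogue) puts $G_f^{\otimes n}$ in $\mathcal P$, and the epimorphism of Theorem~\ref{thm:relative-tensor-exterior} then gives $\gamma_n(\widetilde G)\in\mathcal P$ (respectively $\Gamma_n(\widetilde G)\in\mathcal P$); passing through Corollary~\ref{cor:general-schur-baer} is the same argument stated once for general lifting functors.
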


\begin{proof}
The assertion follows from the two exact sequences above and
Theorem~\ref{thm:relative-tensor-exterior}.
\end{proof}

\begin{corollary}[Relative Baer theorem]
\label{cor:relative-baer}
Let $p\colon\widetilde G\to G$ be an $f$-extension such that
\[
\ker p\subseteq Z_{n-1}(\widetilde G).
\]
If $M_n^{\otimes}(f)$ and $\gamma_n(G)$ are finite, then
$\gamma_n(\widetilde G)$ is finite. Moreover,
\[
|\gamma_n(\widetilde G)|
\leq
|M_n^{\otimes}(f)|\,|\gamma_n(G)|.
\]
\end{corollary}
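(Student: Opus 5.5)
This is the finiteness specialization of Corollary~\ref{cor:relative-schur-baer}(1), made quantitative in the same way that Corollary~\ref{cor:relative-finite} sharpens Corollary~\ref{cor:general-schur-baer}. The plan is to bound $|G_f^{\otimes n}|$ from the exact sequence and then push that bound forward along the epimorphism of Theorem~\ref{thm:relative-tensor-exterior}.

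First, I would bound the size of $G_f^{\otimes n}$. Proposition~\ref{prop:relative-exact-sequence}, applied to the lifting functor $(G^{\otimes n},\lambda_n)$, gives the exact sequence
\[
1\longrightarrow M_n^{\otimes}(f)\longrightarrow G_f^{\otimes n}\longrightarrow\gamma_n(G)\longrightarrow1,
\]
because $\operatorname{Im}\lambda_n^G=\gamma_n(G)$. If $M_n^{\otimes}(f)$ and $\gamma_n(G)$ are finite, then $G_f^{\otimes n}$ is finite by Lagrange, and
\[
|G_f^{\otimes n}|=|M_n^{\otimes}(f)|\,|\gamma_n(G)|.
\]

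Second, I would transfer this bound to $\widetilde G$. Since $\ker p\subseteq Z_{n-1}(\widetilde G)$ and $p$ is an epimorphism, $p$ belongs to $\mathcal Z_n$. By Proposition~\ref{prop:tensor-exterior-lifting}, $(G^{\otimes n},\lambda_n)$ is a $\mathcal Z_n$-lifting functor that sends epimorphisms to epimorphisms, so the standing hypothesis on $F(p)$ holds. Theorem~\ref{thm:relative-tensor-exterior}(1), which is Theorem~\ref{thm:relative-lifting} in this setting, then gives an epimorphism $G_f^{\otimes n}\to\gamma_n(\widetilde G)$. A homomorphic image of a finite group is finite, with order at most that of the source, which yields
\[
|\gamma_n(\widetilde G)|\le|M_n^{\otimes}(f)|\,|\gamma_n(G)|.
\]

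No step here is a real obstacle; the argument is bookkeeping. The one point to check is the case $n=1$, where $\mathcal Z_1$ is not defined. Either one takes $n\ge2$ throughout, as in the definition of $\mathcal Z_n$, or one reads $Z_0=1$. In the latter reading $p$ is an isomorphism, so $\gamma_1(\widetilde G)\cong G$ and the bound holds trivially.
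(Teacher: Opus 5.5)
Your proposal is correct and follows the paper's own route. The paper gives no separate proof of this corollary: it is the specialization of Corollary~\ref{cor:relative-finite} to the tensor-power functor, obtained by combining the exact sequence $1\to M_n^{\otimes}(f)\to G_f^{\otimes n}\to\gamma_n(G)\to1$ with the epimorphism $G_f^{\otimes n}\to\gamma_n(\widetilde G)$ of Theorem~\ref{thm:relative-tensor-exterior}. That is exactly your argument, and your remark on $n=1$ is a harmless aside, since $\mathcal Z_n$ is only defined for $n\geq2$.
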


\begin{corollary}[Relative Baer theorem for the derived series]
\label{cor:relative-baer-derived}
Let $p\colon\widetilde G\to G$ be an $f$-extension such that
\[
\ker p\subseteq\mathfrak D_{n-1}(\widetilde G).
\]
If $M_n^{\wedge}(f)$ and $\Gamma_n(G)$ are finite, then
$\Gamma_n(\widetilde G)$ is finite. Moreover,
\[
|\Gamma_n(\widetilde G)|
\leq
|M_n^{\wedge}(f)|\,|\Gamma_n(G)|.
\]
\end{corollary}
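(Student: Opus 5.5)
The statement is the derived-series analogue of Corollary~\ref{cor:relative-baer}. I will obtain it by specializing Corollary~\ref{cor:relative-finite} to the exterior lifting functor $(G^{\wedge(n)},\mu_n)$. By Proposition~\ref{prop:tensor-exterior-lifting}, this functor is a $\mathcal D_n$-lifting functor and sends epimorphisms to epimorphisms. In particular it sends every morphism of $\mathcal D_n$ to an epimorphism, so the standing assumption of Section~\ref{sec:lifting-functors} holds with $\mathcal C=\mathcal D_n$.

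The first step is to check that $p$ lies in $\mathcal D_n$. An $f$-extension is an epimorphism by definition, and the hypothesis $\ker p\subseteq\mathfrak D_{n-1}(\widetilde G)$ is exactly the defining condition of $\mathcal D_n$. Next, I identify the terms of Corollary~\ref{cor:relative-finite} in this case:
\[
H_F(f)=M_n^{\wedge}(f),
\qquad
\operatorname{Im}\eta_G=\Gamma_n(G),
\qquad
\operatorname{Im}\eta_{\widetilde G}=\Gamma_n(\widetilde G),
\]
because $\mu_n$ has image $\Gamma_n$ on every group.

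The conclusion then follows directly. The exact sequence
\[
1\longrightarrow M_n^{\wedge}(f)\longrightarrow G_f^{\wedge(n)}\longrightarrow\Gamma_n(G)\longrightarrow1
\]
gives $|G_f^{\wedge(n)}|=|M_n^{\wedge}(f)|\,|\Gamma_n(G)|$. Theorem~\ref{thm:relative-tensor-exterior}(2) gives an epimorphism $G_f^{\wedge(n)}\to\Gamma_n(\widetilde G)$. Hence $\Gamma_n(\widetilde G)$ is finite, and its order is bounded by the product.

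I expect no real obstacle. The only point needing care is that the standing epimorphism assumption on $\mathcal C$ is satisfied here, and Proposition~\ref{prop:tensor-exterior-lifting}(3) supplies it. Finiteness alone could also be read off from Corollary~\ref{cor:relative-schur-baer}(2), taking $\mathcal P$ to be the class of finite groups.
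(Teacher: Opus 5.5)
Your argument is correct and is essentially the paper's route: the paper gives this corollary no separate proof, treating it as the specialization of Corollary~\ref{cor:relative-finite} to the $\mathcal D_n$-lifting functor $(G^{\wedge(n)},\mu_n)$. That specialization amounts to the exact sequence for $G_f^{\wedge(n)}$ together with the epimorphism of Theorem~\ref{thm:relative-tensor-exterior}(2), and your checks that $p\in\mathcal D_n$ and that Proposition~\ref{prop:tensor-exterior-lifting}(3) supplies the standing epimorphism assumption are what it requires (with $n\geq 2$ implicit, since $\mathcal D_n$ is only defined there).
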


For $n=2$, one has
\[
G^{\wedge(2)}=G\wedge G,
\qquad
\operatorname{Im}\mu_2^G=[G,G],
\qquad
K_2^{\wedge}(G)=H_2(G;\mathbb Z).
\]
Since $H_2(G;\mathbb Z)$ is central in $G\wedge G$,
\[
N_2^{\wedge}(f)=f_*H_2(\Gamma;\mathbb Z).
\]
Thus
\[
G_f^{\wedge(2)}
=G\wedge_fG
=\frac{G\wedge G}{f_*H_2(\Gamma;\mathbb Z)}.
\]

The arrow-category construction gives a sharper form of the
relative Schur theorem: it also describes the part of the
commutator subgroup which lies in the kernel of the extension.

\begin{corollary}[Relative Schur theorem]
\label{cor:relative-schur}
Let
\[
p\colon\widetilde G\longrightarrow G
\]
be a central $f$-extension, defined by a map
\[
\widetilde f\colon\Gamma\longrightarrow\widetilde G.
\]
Then there is a morphism of $f'$-extensions
\[
\begin{tikzcd}[
  row sep=3.0em,
  column sep=2.5em,
  cells={nodes={inner sep=1pt}}
]
1 \arrow[r]
&
\dfrac{H_2(G;\mathbb Z)}{f_*H_2(\Gamma;\mathbb Z)}
  \arrow[r]
  \arrow[dd,two heads]
&
G\wedge_fG
  \arrow[rr]
  \arrow[dd,two heads,"\overline S"']
&&
{[G,G]}
  \arrow[r]
  \arrow[dd,equal]
&
1
\\
&&&
{[\Gamma,\Gamma]}
  \arrow[ul,"\rho_f"]
  \arrow[ur,"f'"]
  \arrow[dl,"\widetilde f'"']
  \arrow[dr,"f'"']
&&
\\
1 \arrow[r]
&
{\ker p\cap[\widetilde G,\widetilde G]}
  \arrow[r]
&
{[\widetilde G,\widetilde G]}
  \arrow[rr,"p"']
&&
{[G,G]}
  \arrow[r]
&
1 .
\end{tikzcd}
\]
Here
\[
f'\colon[\Gamma,\Gamma]\longrightarrow[G,G],
\qquad
\widetilde f'\colon[\Gamma,\Gamma]
\longrightarrow[\widetilde G,\widetilde G]
\]
are induced by $f$ and $\widetilde f$, respectively. In
particular, there are epimorphisms
\[
G\wedge_fG\longrightarrow[\widetilde G,\widetilde G]
\]
and
\[
\frac{H_2(G;\mathbb Z)}{f_*H_2(\Gamma;\mathbb Z)}
\longrightarrow
\ker p\cap[\widetilde G,\widetilde G].
\]
Consequently, if
\[
\frac{H_2(G;\mathbb Z)}{f_*H_2(\Gamma;\mathbb Z)}
\]
and $[G,G]$ are finite, then $[\widetilde G,\widetilde G]$ is
finite. Moreover,
\[
|[\widetilde G,\widetilde G]|
\leq
\left|
\frac{H_2(G;\mathbb Z)}{f_*H_2(\Gamma;\mathbb Z)}
\right|
\,|[G,G]|.
\]
\end{corollary}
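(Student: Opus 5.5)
This is a specialization of Corollary~\ref{cor:morphism-f-extensions} to the exterior square, so I would identify all the ingredients and then quote that corollary. Take $F=G\wedge G$ with $\eta=\mu_2$, and $\mathcal C=\mathcal D_2$. Since $\mathfrak D_1(E)=Z(E)$ (the condition $[g,x_1]=1$ for all $x_1\in\Gamma_1(E)=E$), the class $\mathcal D_2$ is exactly the class of central extensions. Proposition~\ref{prop:tensor-exterior-lifting} says that $(-\wedge-,\mu_2)$ is a $\mathcal D_2$-lifting functor sending epimorphisms to epimorphisms, so the standing hypothesis of the paper holds and the central $f$-extension $p$ belongs to $\mathcal C$.

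Next I would identify the terms of the top row of Corollary~\ref{cor:morphism-f-extensions}.
\begin{itemize}
\item $\operatorname{Im}\eta_G=[G,G]$ and $\operatorname{Im}\eta_{\widetilde G}=[\widetilde G,\widetilde G]$.
\item $H_F(G)=K_2^\wedge(G)=H_2(G;\mathbb Z)$.
\item $N_F(f)=f_*H_2(\Gamma;\mathbb Z)$, by the centrality remark preceding the statement. Hence $F_f(G)=G\wedge_fG$ and $H_F(f)=H_2(G;\mathbb Z)/f_*H_2(\Gamma;\mathbb Z)$.
\end{itemize}
On the $\Gamma$ side, $\overline F(\Gamma)\cong\operatorname{Im}\eta_\Gamma=[\Gamma,\Gamma]$ via $\overline\eta_\Gamma$, which is the inclusion. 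Under this identification, $f\overline\eta_\Gamma$ becomes the restriction $f'\colon[\Gamma,\Gamma]\to[G,G]$, and $\widetilde f\,\overline\eta_\Gamma$ becomes $\widetilde f'$. One small point needs checking: these maps land in $[G,G]$ and $[\widetilde G,\widetilde G]$, because homomorphisms preserve commutators. This is why the diagram can be read as a morphism of $f'$-extensions rather than of $(f\overline\eta_\Gamma)$-extensions.

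With these identifications, Corollary~\ref{cor:morphism-f-extensions} gives the displayed commutative diagram directly. It has exact rows, the identity on $[G,G]$, and the middle map $\overline S$ onto $[\widetilde G,\widetilde G]$. The same corollary gives that the induced map on kernels,
\[
H_2(G;\mathbb Z)/f_*H_2(\Gamma;\mathbb Z)\longrightarrow\ker p\cap[\widetilde G,\widetilde G],
\]
is surjective; this is the short five-lemma-type argument already given there. The finiteness statement and the bound follow from Corollary~\ref{cor:relative-finite}, or directly: the middle group $G\wedge_fG$ is an extension of $[G,G]$ by the kernel, so it is finite of order equal to the product, and $[\widetilde G,\widetilde G]$ is its image.

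I expect no real obstacle. The only places needing care are, first, checking that $\mathfrak D_1=Z$ so that central extensions lie in $\mathcal D_2$, and second, justifying the identification $\overline F(\Gamma)\cong[\Gamma,\Gamma]$ compatibly with $\rho_f$. For the latter, $\rho_f$ is then the map $[\Gamma,\Gamma]\to G\wedge_fG$ given by lifting a commutator $[x,y]$ to the class of $f(x)\wedge f(y)$, and this is well defined precisely because $f_*H_2(\Gamma)$ has been factored out.
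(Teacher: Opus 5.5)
Your proposal is correct and follows the paper's proof. Like the paper, you specialize Corollary~\ref{cor:morphism-f-extensions} to the exterior square with the commutator map, identify $\overline F(\Gamma)\cong[\Gamma,\Gamma]$ and $H_F(f)=H_2(G;\mathbb Z)/f_*H_2(\Gamma;\mathbb Z)$, and read off the bound from the top exact sequence; your extra checks (that $\mathfrak D_1=Z$, so central extensions lie in $\mathcal D_2$, and that $f\overline\eta_\Gamma$ and $\widetilde f\,\overline\eta_\Gamma$ become $f'$ and $\widetilde f'$) fill in details the paper leaves implicit.
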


\begin{proof}
Apply Corollary~\ref{cor:morphism-f-extensions} to the exterior
square functor with the commutator map
\[
G\wedge G\longrightarrow G.
\]
For this functor
\[
\overline F(\Gamma)\cong[\Gamma,\Gamma],
\qquad
H_F(f)=
\frac{H_2(G;\mathbb Z)}{f_*H_2(\Gamma;\mathbb Z)}.
\]
Since exterior squares send epimorphisms to epimorphisms, the
corollary gives the displayed morphism of extensions. The final
estimate follows from the top exact sequence and the epimorphism
onto $[\widetilde G,\widetilde G]$.
\end{proof}

\subsection{The natural map from the tensor square to the exterior square}

Let
\[
\tau_G\colon G\otimes G\longrightarrow G\wedge G
\]
be the natural epimorphism. The tensor and exterior commutator
maps satisfy
\[
(G\wedge G\longrightarrow G)\tau_G
=
(G\otimes G\longrightarrow G).
\]
Thus $\tau$ is a natural transformation compatible with the two
augmentations.

For $n=2$ one has $\mathcal D_2=\mathcal Z_2$, since
$\mathfrak D_1(G)=Z(G)$. Hence Theorem~
\ref{thm:arrow-lifting-natural-transformation} applies to the
natural transformation $\tau$.

\begin{corollary}\label{cor:tensor-exterior-arrow-lifting}
Define
\[
\mathscr F_\tau(f)=
\left(
\Gamma\otimes\Gamma
\xrightarrow{\rho_f^\tau}
G\wedge G
\right),
\qquad
\rho_f^\tau=(f\wedge f)\tau_\Gamma
=\tau_G(f\otimes f).
\]
Together with the commutator maps this defines a
$\widehat{\mathcal Z_2,\mathcal Z_2}$-lifting functor on
$\operatorname{Arr}(\Grp)$.
\end{corollary}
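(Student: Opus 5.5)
The plan is to obtain the statement as a direct instance of Theorem~\ref{thm:arrow-lifting-natural-transformation}, with
\[
F_1=(-)\otimes(-),\quad \eta^1=\lambda_2,\qquad F_2=(-)\wedge(-),\quad \eta^2=\mu_2,\qquad \alpha=\tau,
\]
and with $\mathcal D=\mathcal C=\mathcal Z_2$. We must check the hypotheses of that theorem one by one.

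First, the compatibility condition $\eta^2\alpha=\eta^1$ is exactly the identity $(G\wedge G\to G)\tau_G=(G\otimes G\to G)$ recorded just before the statement. Naturality of $\tau$ is standard: $\tau_G$ is induced by the identity on generators $g\otimes h\mapsto g\wedge h$, and both functors act on morphisms by $g\otimes h\mapsto f(g)\otimes f(h)$. The same naturality gives the two expressions for $\rho_f^\tau$ in the statement. Thus $\rho^\tau_f=(f\wedge f)\tau_\Gamma=\tau_G(f\otimes f)$, which is the $\rho_f^\alpha$ of the general construction.

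Second, we need $(F_1,\eta^1)$ to be a $\mathcal Z_2$-lifting functor sending morphisms in $\mathcal Z_2$ to epimorphisms. This is Proposition~\ref{prop:tensor-exterior-lifting}(1),(3) with $n=2$. Every element of $\mathcal Z_2$ is an epimorphism, so part~(3) applies.

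Third, we need $(F_2,\eta^2)$ to be a $\mathcal Z_2$-lifting functor. Proposition~\ref{prop:tensor-exterior-lifting}(2) with $n=2$ gives that it is a $\mathcal D_2$-lifting functor. Since $\mathfrak D_1(E)=\{g\mid [g,x_1]=1 \text{ for all } x_1\in\Gamma_1(E)=E\}=Z(E)$, we get $\mathcal D_2=\mathcal Z_2$.

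Having verified all hypotheses, Theorem~\ref{thm:arrow-lifting-natural-transformation} yields that $(\mathscr F_\tau,\widehat\eta)$ is a $\widehat{\mathcal Z_2,\mathcal Z_2}$-lifting functor. Here $\widehat\eta_f=(\lambda_2^\Gamma,\mu_2^G)$, i.e.\ the commutator maps. The only potential obstacle is bookkeeping: one should confirm that the functoriality of $\mathscr F_\tau$ on morphisms, $(a,b)\mapsto(a\otimes a,b\wedge b)$, is well defined as a morphism of arrows. This follows from naturality of $\tau$, exactly as in the general construction preceding Theorem~\ref{thm:arrow-lifting-natural-transformation}. No further argument seems needed.
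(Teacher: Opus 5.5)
Your proposal is correct and follows the paper's own proof. Both apply Theorem~\ref{thm:arrow-lifting-natural-transformation} with $\alpha=\tau$, using Proposition~\ref{prop:tensor-exterior-lifting} for the two lifting properties and for epimorphism preservation by the tensor square, together with $\mathcal D_2=\mathcal Z_2$, which follows from $\mathfrak D_1(G)=Z(G)$.
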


\begin{proof}
The tensor and exterior square functors, with their commutator
maps, are $\mathcal Z_2$-lifting functors by
Proposition~\ref{prop:tensor-exterior-lifting}, and the tensor
square sends epimorphisms to epimorphisms. Apply
Theorem~\ref{thm:arrow-lifting-natural-transformation}.
\end{proof}

Put
\[
J_2(G)=\ker\bigl(G\otimes G\longrightarrow[G,G]\bigr),
\qquad
\nabla(G)=\ker\tau_G.
\]
The map $\tau_G$ sends $J_2(G)$ onto $H_2(G;\mathbb Z)$.

Since $J_2(G)$ is central in $G\otimes G$, for every
$f\colon\Gamma\to G$ we may write
\[
G\otimes_fG
:=G_f^{\otimes2}
=
\frac{G\otimes G}{(f\otimes f)(J_2(\Gamma))}.
\]
Naturality of $\tau$ gives an induced epimorphism
\[
\tau_f\colon G\otimes_fG\longrightarrow G\wedge_fG.
\]

\begin{proposition}\label{prop:relative-tensor-exterior-map}
Put
\[
\nabla_f(G)=\ker\tau_f.
\]
Then there is an exact sequence
\[
1\longrightarrow\nabla_f(G)
\longrightarrow G\otimes_fG
\xrightarrow{\,\tau_f\,}G\wedge_fG
\longrightarrow1,
\]
and
\[
\nabla_f(G)
\cong
\frac{\nabla(G)}
{\nabla(G)\cap(f\otimes f)(J_2(\Gamma))}.
\]
\end{proposition}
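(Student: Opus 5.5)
The argument reduces to a standard third-isomorphism computation inside $G\otimes G$.

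Write $N=(f\otimes f)(J_2(\Gamma))$. Since $J_2(\Gamma)$ is central in $\Gamma\otimes\Gamma$, its image lies in $J_2(G)$, which is central in $G\otimes G$. Hence $N$ is normal in $G\otimes G$ and $G\otimes_fG=(G\otimes G)/N$. Similarly $G\wedge_fG=(G\wedge G)/\tau_G(N)$.

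The first step is to justify $\tau_G(N)=f_*H_2(\Gamma;\mathbb Z)$. Naturality of $\tau$ gives
\[
\tau_G(f\otimes f)=(f\wedge f)\tau_\Gamma .
\]
We also have $\tau_\Gamma(J_2(\Gamma))=H_2(\Gamma;\mathbb Z)$. Combining these,
\[
\tau_G(N)=(f\wedge f)\bigl(H_2(\Gamma;\mathbb Z)\bigr)=f_*H_2(\Gamma;\mathbb Z)=N_2^{\wedge}(f).
\]
Therefore $\tau_G$ induces a well-defined map $\tau_f$. It is surjective because $\tau_G$ is, so the displayed sequence with $\nabla_f(G)=\ker\tau_f$ is exact by definition.

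The second step computes the kernel. Consider the composite
\[
G\otimes G\xrightarrow{\tau_G}G\wedge G\longrightarrow (G\wedge G)/\tau_G(N).
\]
It is surjective with kernel $\tau_G^{-1}(\tau_G(N))=\nabla(G)N$. Hence $\ker\tau_f=\nabla(G)N/N$. The second isomorphism theorem then gives
\[
\nabla(G)N/N\cong\nabla(G)/\bigl(\nabla(G)\cap N\bigr),
\]
which is the claimed formula. Both $\nabla(G)$ and $N$ are normal, so the product and the quotient make sense.

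The only point needing care is the identity $\tau_\Gamma(J_2(\Gamma))=H_2(\Gamma;\mathbb Z)$, which the paper states just before the proposition. It holds because $H_2$ is the kernel of the commutator map on $\Gamma\wedge\Gamma$, and $\tau_\Gamma$ is surjective and compatible with the commutator maps. Everything else is routine.
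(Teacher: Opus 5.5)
Your proposal is correct and follows the paper's argument. In both, $\tau_G$ carries $N=(f\otimes f)(J_2(\Gamma))$ onto $f_*H_2(\Gamma;\mathbb Z)$, so it induces the epimorphism $\tau_f$, whose kernel is the image $\nabla(G)N/N$ of $\nabla(G)$, identified with the displayed quotient by the second isomorphism theorem. You spell out details the paper leaves implicit: naturality of $\tau$, and $\tau_\Gamma(J_2(\Gamma))=H_2(\Gamma;\mathbb Z)$. One small correction: the inclusion $N\subseteq J_2(G)$ comes from naturality of the commutator map, not from centrality of $J_2(\Gamma)$.
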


\begin{proof}
The map $\tau_G$ sends
$(f\otimes f)(J_2(\Gamma))$ onto
$f_*H_2(\Gamma;\mathbb Z)$, so it induces the epimorphism
$\tau_f$. Its kernel is the image of $\nabla(G)$ in
$G\otimes_fG$, which is exactly the displayed quotient.
\end{proof}

\begin{corollary}\label{cor:relative-tensor-exterior-epi-f}
If $f\colon\Gamma\to G$ is an epimorphism, then
\[
G\otimes_fG\cong G\wedge_fG.
\]
\end{corollary}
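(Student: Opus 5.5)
By Proposition~\ref{prop:relative-tensor-exterior-map}, the plan reduces to showing $\nabla_f(G)=1$, that is,
\[
\nabla(G)\subseteq(f\otimes f)(J_2(\Gamma)).
\]
I will use the standard description of $\nabla(G)$: it is the normal subgroup of $G\otimes G$ generated by the elements $g\otimes g$ for $g\in G$. It is central in $G\otimes G$, since $g\otimes g$ commutes with every element of $G\otimes G$ by the defining relations; this fact is standard.

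First I check that each $\gamma\otimes\gamma$ with $\gamma\in\Gamma$ lies in $J_2(\Gamma)$. Its image under the commutator map is $[\gamma,\gamma]=1$, so this is immediate.

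Now let $g\in G$. Because $f$ is an epimorphism, choose $\gamma\in\Gamma$ with $f(\gamma)=g$. Then
\[
g\otimes g=(f\otimes f)(\gamma\otimes\gamma)\in(f\otimes f)(J_2(\Gamma)).
\]
Hence every generator of $\nabla(G)$ lies in $(f\otimes f)(J_2(\Gamma))$.

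The main point to verify is that $(f\otimes f)(J_2(\Gamma))$ is a normal subgroup of $G\otimes G$; otherwise it would not automatically contain the normal closure $\nabla(G)$ of these generators. There are two ways to see this:
\begin{itemize}
\item $J_2(\Gamma)$ is central in $\Gamma\otimes\Gamma$, and $f\otimes f$ is surjective because the tensor square preserves epimorphisms (Proposition~\ref{prop:tensor-exterior-lifting}). So the image of $J_2(\Gamma)$ is central in $G\otimes G$, hence normal.
\item Alternatively, this subgroup is exactly $N_2^{\otimes}(f)$, which is a normal closure by definition. The paper already uses this identification when it writes $G\otimes_fG$ as $(G\otimes G)/(f\otimes f)(J_2(\Gamma))$.
\end{itemize}
It follows that $\nabla(G)\subseteq(f\otimes f)(J_2(\Gamma))$.

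Therefore the quotient $\nabla(G)/\bigl(\nabla(G)\cap(f\otimes f)(J_2(\Gamma))\bigr)$ is trivial, so $\nabla_f(G)=1$. By the exact sequence in Proposition~\ref{prop:relative-tensor-exterior-map}, $\tau_f$ is then an isomorphism $G\otimes_fG\cong G\wedge_fG$.
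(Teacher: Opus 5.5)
Your proposal is correct and follows the paper's own argument: you reduce to $\nabla_f(G)=1$ via Proposition~\ref{prop:relative-tensor-exterior-map}, then use surjectivity of $f$ to write each diagonal generator as $g\otimes g=(f\otimes f)(x\otimes x)$ with $x\otimes x\in J_2(\Gamma)$. Your extra normality check is harmless but not needed. The elements $g\otimes g$ are central, so $\nabla(G)$ is already generated by them as a subgroup. Moreover, $(f\otimes f)(J_2(\Gamma))\subseteq J_2(G)$ is central for any $f$.
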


\begin{proof}
The group $\nabla(G)$ is generated by the diagonal elements
$g\otimes g$. If $g=f(x)$, then
\[
g\otimes g=(f\otimes f)(x\otimes x),
\]
and $x\otimes x\in J_2(\Gamma)$. Hence
\[
\nabla(G)\subseteq(f\otimes f)(J_2(\Gamma)),
\]
so $\nabla_f(G)=1$.
\end{proof}

\section{Relative Schur multipliers}
\label{sec:relative-homology}

Let $f\colon\Gamma\to G$ be a homomorphism. Following
Farjoun and Segev \cite{FarjounSegev2017}, let
\[
H_*(G,\Gamma)
\]
denote the homology of the mapping cone of
\[
Bf\colon B\Gamma\longrightarrow BG.
\]
The corresponding exact sequence contains
\[
H_2(\Gamma)
\xrightarrow{\,f_*\,}
H_2(G)
\longrightarrow
H_2(G,\Gamma)
\longrightarrow
H_1(\Gamma)
\xrightarrow{\,f_*\,}
H_1(G).
\]

\begin{proposition}\label{prop:relative-schur-sequence}
There is a natural exact sequence
\[
0\longrightarrow
\frac{H_2(G)}{f_*H_2(\Gamma)}
\longrightarrow
H_2(G,\Gamma)
\longrightarrow
\ker\bigl(H_1(\Gamma)\to H_1(G)\bigr)
\longrightarrow0.
\]
In particular, if
\[
H_1(\Gamma)\longrightarrow H_1(G)
\]
is injective, then
\[
\frac{H_2(G)}{f_*H_2(\Gamma)}
\cong H_2(G,\Gamma).
\]
\end{proposition}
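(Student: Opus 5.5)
The plan is to extract the short exact sequence from the long exact homology sequence of the pair (mapping cone) displayed just above the statement, namely
\[
H_2(\Gamma)\xrightarrow{\,f_*\,}H_2(G)\xrightarrow{\,j\,}H_2(G,\Gamma)\xrightarrow{\,\partial\,}H_1(\Gamma)\xrightarrow{\,f_*\,}H_1(G).
\]
I take this sequence as given from the construction of Farjoun and Segev. It comes from the cofibre sequence $B\Gamma\to BG\to C_{Bf}$ and the long exact sequence in reduced homology, with $H_*(G,\Gamma)=\widetilde H_*(C_{Bf})$. It is natural with respect to commutative squares $(a,b)\colon f_1\to f_2$ in $\operatorname{Arr}(\Grp)$, because such a square induces a map of cofibre sequences.

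First, exactness at $H_2(G)$ gives $\ker j=\operatorname{Im}f_*=f_*H_2(\Gamma)$. Hence $j$ induces a monomorphism
\[
\frac{H_2(G)}{f_*H_2(\Gamma)}\longrightarrow H_2(G,\Gamma).
\]
Second, exactness at $H_2(G,\Gamma)$ gives $\operatorname{Im}j=\ker\partial$. Third, exactness at $H_1(\Gamma)$ gives $\operatorname{Im}\partial=\ker\bigl(H_1(\Gamma)\to H_1(G)\bigr)$. So $\partial$, with its codomain restricted to this kernel, is surjective, and its kernel is exactly the image of the induced monomorphism. Together these three facts give the displayed short exact sequence.

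Naturality follows because every map in the long exact sequence is natural in the arrow $f$. In particular, for a morphism $(a,b)\colon f_1\to f_2$, the map $b_*$ sends $f_{1*}H_2(\Gamma_1)$ into $f_{2*}H_2(\Gamma_2)$, since $b_*f_{1*}=f_{2*}a_*$. Likewise, $a_*$ maps $\ker(H_1(\Gamma_1)\to H_1(G_1))$ into $\ker(H_1(\Gamma_2)\to H_1(G_2))$. Hence all three terms are functors on $\operatorname{Arr}(\Grp)$, and the maps between them are natural transformations.

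For the final assertion: if $H_1(\Gamma)\to H_1(G)$ is injective, the right-hand term is $0$, and the monomorphism above becomes an isomorphism.

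The only point requiring care is the identification of the relative homology sequence itself, namely that the homology of the mapping cone fits into this exact sequence naturally. Since the paper adopts the definition of \cite{FarjounSegev2017} together with the displayed sequence, I will cite that and not re-derive it. The rest is pure diagram-reading of exactness.
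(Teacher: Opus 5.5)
Your proposal is correct and takes the same approach as the paper, whose proof consists of the single remark that the statement follows from the exact homology sequence of the mapping cone of $Bf$. You have spelled out the three exactness checks (at $H_2(G)$, $H_2(G,\Gamma)$ and $H_1(\Gamma)$) and the naturality in $\operatorname{Arr}(\Grp)$, which the paper leaves implicit.
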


\begin{proof}
This follows from the exact homology sequence of the mapping
cone of $Bf$.
\end{proof}

Thus the group occurring in
Corollary~\ref{cor:relative-schur} is naturally a subgroup of
the relative Schur multiplier $H_2(G,\Gamma)$. It coincides
with $H_2(G,\Gamma)$ when $f$ is injective on abelianizations.

\begin{corollary}\label{cor:relative-homology-kernel}
Suppose that
\[
H_1(\Gamma)\longrightarrow H_1(G)
\]
is injective. If $p\colon\widetilde G\to G$ is a central
$f$-extension, then there is an epimorphism
\[
H_2(G,\Gamma)
\longrightarrow
\ker p\cap[\widetilde G,\widetilde G].
\]
\end{corollary}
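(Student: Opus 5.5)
The proof will simply combine two results already established in the paper. First, Corollary~\ref{cor:relative-schur} applies to any central $f$-extension $p\colon\widetilde G\to G$. It provides an epimorphism
\[
\frac{H_2(G)}{f_*H_2(\Gamma)}\longrightarrow\ker p\cap[\widetilde G,\widetilde G].
\]
This epimorphism is the map induced on kernels by the morphism of extensions from $G\wedge_fG\to[G,G]$ to $[\widetilde G,\widetilde G]\to[G,G]$. Its surjectivity comes from $\overline S$ being onto $[\widetilde G,\widetilde G]$ while the right-hand vertical map is the identity.

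Second, apply Proposition~\ref{prop:relative-schur-sequence} under the hypothesis that $H_1(\Gamma)\to H_1(G)$ is injective. Then $\ker\bigl(H_1(\Gamma)\to H_1(G)\bigr)=0$, so the inclusion
\[
\frac{H_2(G)}{f_*H_2(\Gamma)}\longrightarrow H_2(G,\Gamma)
\]
is an isomorphism. Composing its inverse with the epimorphism from the first step gives the required epimorphism $H_2(G,\Gamma)\to\ker p\cap[\widetilde G,\widetilde G]$.

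There is essentially no obstacle; the only point to check is that the hypotheses match. A central extension satisfies $\ker p\subseteq Z(\widetilde G)=Z_1(\widetilde G)$, so $p\in\mathcal Z_2=\mathcal D_2$. The exterior square is a $\mathcal D_2$-lifting functor sending epimorphisms to epimorphisms by Proposition~\ref{prop:tensor-exterior-lifting}, which is exactly what Corollary~\ref{cor:relative-schur} requires. One should also note that the identification $K_2^{\wedge}(G)=H_2(G;\mathbb Z)$ used there is compatible with the map $H_2(G)\to H_2(G,\Gamma)$ from the mapping-cone sequence. Since we only need the abstract isomorphism of quotients, and not any compatibility with further structure, this causes no difficulty.
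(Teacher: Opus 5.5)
Your proposal is correct and follows the paper's proof exactly: use Proposition~\ref{prop:relative-schur-sequence} to identify $H_2(G,\Gamma)$ with $H_2(G)/f_*H_2(\Gamma)$ when $H_1(\Gamma)\to H_1(G)$ is injective, then compose with the kernel epimorphism from Corollary~\ref{cor:relative-schur}. Your check that a central extension lies in $\mathcal Z_2=\mathcal D_2$ is a harmless addition to the paper's two-line argument.
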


\begin{proof}
By Proposition~\ref{prop:relative-schur-sequence},
\[
H_2(G,\Gamma)
\cong
\frac{H_2(G)}{f_*H_2(\Gamma)}.
\]
Now apply Corollary~\ref{cor:relative-schur}.
\end{proof}

\begin{corollary}\label{cor:relative-homology-schur}
Let
\[
p\colon\widetilde G\longrightarrow G
\]
be a central $f$-extension. If $H_2(G,\Gamma)$ and $[G,G]$
are finite, then $[\widetilde G,\widetilde G]$ is finite.
\end{corollary}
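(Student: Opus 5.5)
The plan is to reduce the statement to Corollary~\ref{cor:relative-schur} by way of Proposition~\ref{prop:relative-schur-sequence}. That proposition exhibits $H_2(G)/f_*H_2(\Gamma)$ as a subgroup of $H_2(G,\Gamma)$. Since subgroups of finite groups are finite, finiteness of $H_2(G,\Gamma)$ implies finiteness of
\[
\frac{H_2(G;\mathbb Z)}{f_*H_2(\Gamma;\mathbb Z)}.
\]
Note that, unlike Corollary~\ref{cor:relative-homology-kernel}, we do not need injectivity of $H_1(\Gamma)\to H_1(G)$. We only use the injective arrow at the left end of the short exact sequence, and this holds for every $f$.

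Next, since $p$ is a central $f$-extension, Corollary~\ref{cor:relative-schur} applies directly. The quotient $H_2(G;\mathbb Z)/f_*H_2(\Gamma;\mathbb Z)$ and $[G,G]$ are both finite, so $[\widetilde G,\widetilde G]$ is finite, with the bound
\[
|[\widetilde G,\widetilde G]|
\leq
\left|\frac{H_2(G;\mathbb Z)}{f_*H_2(\Gamma;\mathbb Z)}\right|\,|[G,G]|
\leq |H_2(G,\Gamma)|\,|[G,G]|.
\]
Alternatively, one can argue via the top row of the diagram: the extension
\[
1\to H_2(G)/f_*H_2(\Gamma)\to G\wedge_fG\to[G,G]\to1
\]
has finite kernel and finite quotient, so $G\wedge_fG$ is finite, and it surjects onto $[\widetilde G,\widetilde G]$.

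There is no real obstacle here. The only point requiring care is the exactness of the mapping-cone sequence at $H_2(G)$, which makes the map $H_2(G)/f_*H_2(\Gamma)\to H_2(G,\Gamma)$ injective. This is exactly what Proposition~\ref{prop:relative-schur-sequence} provides, and we take it as given.
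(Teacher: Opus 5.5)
Your proof is correct and follows the paper's argument: Proposition~\ref{prop:relative-schur-sequence} embeds $H_2(G)/f_*H_2(\Gamma)$ in $H_2(G,\Gamma)$, so it is finite, and Corollary~\ref{cor:relative-schur} then gives finiteness of $[\widetilde G,\widetilde G]$. Your remark that no injectivity on $H_1$ is needed is valid, and so is the resulting bound $|[\widetilde G,\widetilde G]|\le|H_2(G,\Gamma)|\,|[G,G]|$.
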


\begin{proof}
By Proposition~\ref{prop:relative-schur-sequence},
\[
H_2(G)/f_*H_2(\Gamma)
\]
is finite. The assertion follows from
Corollary~\ref{cor:relative-schur}.
\end{proof}

\section{Examples and applications}
\label{sec:applications}

\subsection{Central extensions of $\mathbb Z^2$}

Let $G=\mathbb Z^2$
and, for $m\geq 1$, define
\[
f_m\colon\mathbb Z^2\longrightarrow\mathbb Z^2,
\qquad
f_m(a,b)=(ma,b).
\]

\begin{proposition}\label{prop:Z2-example}
Let
\[
p\colon\widetilde G\longrightarrow\mathbb Z^2
\]
be a central $f_m$-extension. Then
$[\widetilde G,\widetilde G]$ is cyclic of order dividing $m$.
\end{proposition}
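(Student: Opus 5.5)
The plan is to apply Corollary~\ref{cor:relative-schur} with $G=\Gamma=\mathbb Z^2$ and $f=f_m$. Since $G$ is abelian, $[G,G]=1$. The top exact sequence then gives $G\wedge_{f_m}G\cong H_2(\mathbb Z^2;\mathbb Z)/(f_m)_*H_2(\mathbb Z^2;\mathbb Z)$. The bottom row of the morphism of extensions shows that $\ker p\cap[\widetilde G,\widetilde G]=[\widetilde G,\widetilde G]$ (the right-hand term is $[G,G]=1$). Hence the corollary yields an epimorphism
\[
\frac{H_2(\mathbb Z^2;\mathbb Z)}{(f_m)_*H_2(\mathbb Z^2;\mathbb Z)}\longrightarrow[\widetilde G,\widetilde G].
\]
It therefore suffices to show that this quotient is cyclic of order $m$.

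To compute the quotient, I will use the standard identification $H_2(\mathbb Z^2;\mathbb Z)\cong\Lambda^2(\mathbb Z^2)\cong\mathbb Z$. For an abelian group $A$, the exterior square $A\wedge A$ is the ordinary exterior square and equals $H_2(A;\mathbb Z)$. A generator is $e_1\wedge e_2$, where $e_1=(1,0)$ and $e_2=(0,1)$. By naturality, $f_m$ acts as $\Lambda^2 f_m$, so
\[
(f_m)_*(e_1\wedge e_2)=(me_1)\wedge e_2=m(e_1\wedge e_2).
\]
Equivalently, $\Lambda^2$ of an endomorphism of $\mathbb Z^2$ is multiplication by its determinant, which here is $m$. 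Thus $(f_m)_*H_2(\mathbb Z^2)=m\mathbb Z$, and the quotient is $\mathbb Z/m$.

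A cyclic group of order $m$ surjects onto $[\widetilde G,\widetilde G]$, so $[\widetilde G,\widetilde G]$ is a homomorphic image of $\mathbb Z/m$, i.e.\ cyclic of order dividing $m$. The central hypothesis is exactly the one needed for the exterior square lifting in Corollary~\ref{cor:relative-schur}, via $\mathcal D_2=\mathcal Z_2$.

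The only real step is identifying the induced map on $H_2$ with the determinant. I expect no obstacle here beyond citing the natural isomorphism $A\wedge A\cong\Lambda^2A$ for abelian $A$.
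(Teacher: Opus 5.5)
Your proposal is correct and follows essentially the same route as the paper's proof. In both, you identify $H_2(\mathbb Z^2;\mathbb Z)\cong\Lambda^2\mathbb Z^2\cong\mathbb Z$ with $(f_m)_*$ acting as multiplication by $m$, note that $\ker p\cap[\widetilde G,\widetilde G]=[\widetilde G,\widetilde G]$ because $\mathbb Z^2$ is abelian, and apply Corollary~\ref{cor:relative-schur} to obtain an epimorphism $\mathbb Z/m\mathbb Z\to[\widetilde G,\widetilde G]$.
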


\begin{proof}
There are natural identifications
\[
H_2(\mathbb Z^2;\mathbb Z)
\cong
\bigwedge\nolimits^2\mathbb Z^2
\cong\mathbb Z.
\]
Under these identifications, $(f_m)_*$ is multiplication by
$m$. Hence
\[
\frac{H_2(\mathbb Z^2;\mathbb Z)}
     {(f_m)_*H_2(\mathbb Z^2;\mathbb Z)}
\cong
\mathbb Z/m\mathbb Z.
\]
Since $\mathbb Z^2$ is abelian,
\[
\ker p\cap[\widetilde G,\widetilde G]
=[\widetilde G,\widetilde G].
\]
Corollary~\ref{cor:relative-schur} therefore gives an
epimorphism
\[
\mathbb Z/m\mathbb Z
\longrightarrow
[\widetilde G,\widetilde G].
\]
The assertion follows.
\end{proof}

\subsection{Fundamental groups of fibrations}

We now give a topological application. Let
\[
F\xrightarrow{i}E\xrightarrow{q}B
\]
be a fibration of connected CW-complexes. Recall that the
homomorphism
\[
i_*\colon\pi_1(F)\longrightarrow\pi_1(E)
\]
has a natural crossed-module structure; in particular,
\[
\ker i_*\subseteq Z(\pi_1(F))
\]
\cite[Section~2.6]{BrownHigginsSivera2011}.

\begin{proposition}\label{prop:fibration-application}
Let
\[
F\xrightarrow{i}E\xrightarrow{q}B
\]
be a fibration of connected CW-complexes with
$\pi_1(B)=1$. Let $u\colon X\to F$ be a map from a connected
CW-complex. Suppose that
\[
\operatorname{Im}
\bigl(H_2(X;\mathbb Z)\longrightarrow H_2(E;\mathbb Z)\bigr)
\]
has finite index and that
\[
[\pi_1(E),\pi_1(E)]
\]
is finite. Then
\[
[\pi_1(F),\pi_1(F)]
\]
is finite.
\end{proposition}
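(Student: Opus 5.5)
We apply Corollary~\ref{cor:relative-homology-schur}, or rather Corollary~\ref{cor:relative-schur} directly, to a central extension built from the long exact homotopy sequence of the fibration. Since $\pi_1(B)=1$, the sequence
\[
\pi_2(B)\xrightarrow{\partial}\pi_1(F)\xrightarrow{i_*}\pi_1(E)\longrightarrow\pi_1(B)=1
\]
shows that $p=i_*\colon\pi_1(F)\to\pi_1(E)$ is an epimorphism. By the crossed-module property quoted above, $\ker p\subseteq Z(\pi_1(F))$. Thus $p$ is a central extension of $G=\pi_1(E)$ with $\widetilde G=\pi_1(F)$.

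Next put $\Gamma=\pi_1(X)$, $\widetilde f=u_*\colon\Gamma\to\pi_1(F)$ and $f=i_*u_*\colon\Gamma\to\pi_1(E)$. Then $p\widetilde f=f$, so $p$ is a central $f$-extension. Corollary~\ref{cor:relative-schur} reduces the claim to finiteness of $[G,G]$, which is a hypothesis, and of $H_2(G;\mathbb Z)/f_*H_2(\Gamma;\mathbb Z)$.

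The remaining step, which is the main point, is to compare the space-level hypothesis with group homology. For a connected CW-complex $Y$, the classifying map $Y\to B\pi_1(Y)$ induces a natural epimorphism $H_2(Y;\mathbb Z)\to H_2(\pi_1(Y);\mathbb Z)$ (Hopf's theorem: the cokernel of the Hurewicz map $\pi_2(Y)\to H_2(Y)$ is $H_2(\pi_1 Y)$). Naturality gives the commutative square
\[
\begin{tikzcd}
H_2(X) \arrow[r] \arrow[d,two heads] & H_2(E) \arrow[d,two heads]\\
H_2(\Gamma) \arrow[r,"f_*"'] & H_2(G).
\end{tikzcd}
\]
Since the right vertical map is surjective, it maps the finite-index image of $H_2(X)$ onto a finite-index subgroup of $H_2(G)$. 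By commutativity this subgroup is contained in $f_*H_2(\Gamma)$. Hence $H_2(G;\mathbb Z)/f_*H_2(\Gamma;\mathbb Z)$ is a quotient of the finite group $H_2(E)/\operatorname{Im}H_2(X)$, and so it is finite.

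Combining these steps, Corollary~\ref{cor:relative-schur} gives that $[\pi_1(F),\pi_1(F)]$ is finite, with
\[
|[\pi_1(F),\pi_1(F)]|\le|H_2(E)/\operatorname{Im}H_2(X)|\cdot|[\pi_1(E),\pi_1(E)]|.
\]
The only potential obstacle is the surjectivity and naturality of $H_2(Y)\to H_2(\pi_1Y)$. This is standard: $B\pi_1(Y)$ is obtained from $Y$ by attaching cells of dimension $\ge3$, and the map is natural for maps of spaces.
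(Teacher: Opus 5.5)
Your proposal is correct and follows the paper's proof step for step. You show that $i_*\colon\pi_1(F)\to\pi_1(E)$ is a central $f$-extension for $f=(i\circ u)_*$, lifted by $u_*$. The naturality square for the epimorphisms $H_2(Y;\mathbb Z)\to H_2(\pi_1(Y);\mathbb Z)$ then makes $H_2(G;\mathbb Z)/f_*H_2(\Gamma;\mathbb Z)$ a quotient of $H_2(E;\mathbb Z)/\operatorname{Im}H_2(X;\mathbb Z)$, and Corollary~\ref{cor:relative-schur} finishes. The explicit bound you record is a harmless extra that follows directly from the estimate in that corollary.
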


\begin{proof}
The homotopy exact sequence of the fibration gives an exact
sequence
\[
\pi_2(B)\longrightarrow
\pi_1(F)\xrightarrow{i_*}\pi_1(E)\longrightarrow1.
\]
Since $\ker i_*$ is central, this is a central extension.

Put
\[
\Gamma=\pi_1(X),
\qquad
G=\pi_1(E),
\qquad
f=(i\circ u)_*.
\]
The homomorphism
\[
u_*\colon\Gamma\longrightarrow\pi_1(F)
\]
is a lifting of $f$. Thus
\[
i_*\colon\pi_1(F)\longrightarrow G
\]
is a central $f$-extension.

The classifying maps of $X$ and $E$ give a commutative
diagram
\[
\begin{tikzcd}
H_2(X;\mathbb Z)
    \arrow[r,"(i\circ u)_*"]
    \arrow[d,two heads]
&
H_2(E;\mathbb Z)
    \arrow[d,two heads]
\\
H_2(\pi_1(X);\mathbb Z)
    \arrow[r,"f_*"']
&
H_2(\pi_1(E);\mathbb Z).
\end{tikzcd}
\]
The vertical homomorphisms are epimorphisms. It follows that
\[
\frac{H_2(\pi_1(E);\mathbb Z)}
     {f_*H_2(\pi_1(X);\mathbb Z)}
\]
is a homomorphic image of
\[
\frac{H_2(E;\mathbb Z)}
     {\operatorname{Im}
       \bigl(H_2(X;\mathbb Z)\to H_2(E;\mathbb Z)\bigr)}.
\]
It is therefore finite. The assertion now follows from
Corollary~\ref{cor:relative-schur}.
\end{proof}

\section{Orderability of relative tensor and exterior squares}
\label{sec:orderability}

We use the following standard facts. Subgroups of left-orderable
and circularly orderable groups have the same property. An
extension of a left-orderable group by a left-orderable group is
left-orderable, while an extension of a circularly orderable group
by a left-orderable group is circularly orderable
\cite{ClayGhaswala2021}.

We first record a consequence of the general relative lifting
theorem.

\begin{proposition}\label{prop:relative-orderability}
Let $(F,\eta)$ be a $\mathcal C$-lifting functor, and suppose
that $F$ sends morphisms in $\mathcal C$ to epimorphisms. Let
\[
p\colon\widetilde G\longrightarrow G
\]
be an $f$-extension belonging to $\mathcal C$. If $H_F(f)$ is
left-orderable, then:
\begin{enumerate}
\item $F_f(G)$ is left-orderable whenever $\widetilde G$ is
      left-orderable;

\item $F_f(G)$ is circularly orderable whenever $\widetilde G$
      is circularly orderable.
\end{enumerate}
\end{proposition}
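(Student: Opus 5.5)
We realise $F_f(G)$ as an extension with left-orderable kernel $H_F(f)$ and quotient embedded in $\widetilde G$, then apply the extension facts recalled at the start of this section.

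First, let $S\colon F(G)\to\widetilde G$ be a lifting along $p$. By Lemma~\ref{lem:f-extension-factorization} it induces $\overline S\colon F_f(G)\to\widetilde G$, whose image is $\operatorname{Im}\eta_{\widetilde G}$ by Theorem~\ref{thm:relative-lifting}. The image is a subgroup of $\widetilde G$. Hence it is left-orderable, respectively circularly orderable, whenever $\widetilde G$ is.

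Second, we identify $\ker\overline S$. Since $pS=\eta_G$, we have $\ker S\subseteq H_F(G)$. Passing to the quotient by $N_F(f)\subseteq\ker S$ gives
\[
\ker\overline S=\frac{\ker S}{N_F(f)}\subseteq\frac{H_F(G)}{N_F(f)}=H_F(f).
\]
Subgroups of left-orderable groups are left-orderable, so $\ker\overline S$ is left-orderable. This yields an exact sequence
\[
1\longrightarrow\ker\overline S\longrightarrow F_f(G)\xrightarrow{\,\overline S\,}\operatorname{Im}\eta_{\widetilde G}\longrightarrow1.
\]

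Third, we conclude. In this sequence the kernel is left-orderable. If the quotient is left-orderable, then $F_f(G)$ is left-orderable, because an extension of a left-orderable group by a left-orderable group is left-orderable. If the quotient is circularly orderable, then $F_f(G)$ is circularly orderable, by the cited result of Clay and Ghaswala on extensions of circularly orderable groups by left-orderable groups.

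The only delicate point is the second step. The kernel of $\overline S$ must be controlled by $H_F(f)$ rather than by some larger group. This is exactly where we use the inclusion $\ker S\subseteq\ker\eta_G$, which follows from the lifting identity $pS=\eta_G$. The surjectivity hypothesis on $F(p)$ is only needed to identify the image with $\operatorname{Im}\eta_{\widetilde G}$. For the orderability conclusion it would suffice that the image is some subgroup of $\widetilde G$.
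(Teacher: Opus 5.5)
Your proof is correct and is essentially the paper's argument. You take the epimorphism $\overline S\colon F_f(G)\to\operatorname{Im}\eta_{\widetilde G}$, show that its kernel lies in $H_F(f)$ (you use $pS=\eta_G$, the paper uses the equivalent $p\overline S=\eta_f$), and conclude from the extension facts for left and circular orderability. Your closing remark is also accurate: surjectivity of $F(p)$ is needed only to identify the image, not for the orderability conclusion.
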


\begin{proof}
Let
\[
\overline S\colon F_f(G)\longrightarrow
\operatorname{Im}\eta_{\widetilde G}
\]
be the epimorphism given by
Theorem~\ref{thm:relative-lifting}, and put
\[
K=\ker\overline S.
\]
Since
\[
p\overline S=\eta_f,
\]
we have
\[
K\subseteq\ker\eta_f=H_F(f).
\]
Thus $K$ is left-orderable. Moreover,
$\operatorname{Im}\eta_{\widetilde G}$ is a subgroup of
$\widetilde G$. The assertion follows from the exact sequence
\[
1\longrightarrow K\longrightarrow F_f(G)
\longrightarrow\operatorname{Im}\eta_{\widetilde G}
\longrightarrow1.
\]
\end{proof}

We apply this proposition to the tensor and exterior squares.
Recall that
\[
J_2(G)=\ker\bigl(G\otimes G\longrightarrow[G,G]\bigr)
\]
and that both $J_2(G)$ and
\[
H_2(G;\mathbb Z)
=
\ker\bigl(G\wedge G\longrightarrow[G,G]\bigr)
\]
are central in the corresponding tensor and exterior squares. We
also use the relative squares introduced above:
\[
G\otimes_fG
=
\frac{G\otimes G}{(f\otimes f)(J_2(\Gamma))},
\qquad
G\wedge_fG
=
\frac{G\wedge G}{f_*H_2(\Gamma;\mathbb Z)}.
\]
The following statements are relative versions of the
orderability criteria for non-abelian tensor and exterior
squares obtained in \cite{Ivanov2024}.
\begin{theorem}\label{thm:relative-squares-orderability}
Let
\[
p\colon\widetilde G\longrightarrow G
\]
be a central $f$-extension.

\begin{enumerate}
\item Suppose that
\[
\frac{H_2(G;\mathbb Z)}
     {f_*H_2(\Gamma;\mathbb Z)}
\]
is torsion-free. If $\widetilde G$ is left-orderable,
respectively circularly orderable, then $G\wedge_f G$ is
left-orderable, respectively circularly orderable.

\item Suppose that
\[
\frac{J_2(G)}
     {(f\otimes f)(J_2(\Gamma))}
\]
is torsion-free. If $\widetilde G$ is left-orderable,
respectively circularly orderable, then $G\otimes_f G$ is
left-orderable, respectively circularly orderable.
\end{enumerate}
\end{theorem}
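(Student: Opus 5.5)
The plan is to deduce both parts from Proposition~\ref{prop:relative-orderability}, applied to the exterior square (part 1) and to the tensor square (part 2). We take $\mathcal C=\mathcal Z_2$, the class of central extensions, i.e.\ epimorphisms $p$ with $\ker p\subseteq Z(\widetilde G)$. By Proposition~\ref{prop:tensor-exterior-lifting}, $(G\wedge G,\mu_2)$ and $(G\otimes G,\lambda_2)$ are $\mathcal Z_2$-lifting functors; recall that $\mathfrak D_1=Z$, so $\mathcal D_2=\mathcal Z_2$. Both functors send epimorphisms to epimorphisms. Hence the standing hypothesis holds, and a central $f$-extension $p\colon\widetilde G\to G$ is an $f$-extension belonging to $\mathcal C$.

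Next we identify $H_F(f)$ in each case. For the exterior square, $H_F(G)=K_2^{\wedge}(G)=H_2(G;\mathbb Z)$ and $N_F(f)=f_*H_2(\Gamma;\mathbb Z)$, as noted after Corollary~\ref{cor:relative-baer-derived}, using centrality. Thus
\[
H_F(f)=\frac{H_2(G;\mathbb Z)}{f_*H_2(\Gamma;\mathbb Z)}
\qquad\text{and}\qquad
F_f(G)=G\wedge_fG.
\]
For the tensor square, $H_F(G)=J_2(G)$. Since $J_2(G)$ is central in $G\otimes G$, the normal closure $N_F(f)$ equals $(f\otimes f)(J_2(\Gamma))$. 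Hence
\[
H_F(f)=J_2(G)/(f\otimes f)(J_2(\Gamma))
\qquad\text{and}\qquad
F_f(G)=G\otimes_fG.
\]

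The one step needing an argument is that these kernels are left-orderable. In each case $H_F(f)$ is a quotient of the abelian groups $H_2(G;\mathbb Z)$ and $J_2(G)$ respectively, so it is abelian. By hypothesis it is torsion-free. A torsion-free abelian group is left-orderable: it embeds in a $\mathbb Q$-vector space, and one can order lexicographically with respect to a basis, or invoke the standard fact that torsion-free abelian groups are bi-orderable. This is the main point where the torsion-freeness hypothesis enters, and it is routine.

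Finally, Proposition~\ref{prop:relative-orderability}(1) and (2) give the conclusions. If $\widetilde G$ is left-orderable, respectively circularly orderable, then so is $F_f(G)$. This means $G\wedge_fG$ in part 1 and $G\otimes_fG$ in part 2. Underlying this is the extension $1\to K\to F_f(G)\to\operatorname{Im}\eta_{\widetilde G}\to1$, where $K\subseteq H_F(f)$ is left-orderable and $\operatorname{Im}\eta_{\widetilde G}=[\widetilde G,\widetilde G]$ is a subgroup of $\widetilde G$.
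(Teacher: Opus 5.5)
Your proposal is correct and follows the paper's proof. Like the paper, you apply Proposition~\ref{prop:relative-orderability} to the exterior and tensor square functors with $\mathcal C=\mathcal Z_2$, after observing that the relative kernels are quotients of the central, hence abelian, groups $H_2(G;\mathbb Z)$ and $J_2(G)$, so that torsion-freeness makes them left-orderable; you simply make explicit the choice of $\mathcal C$ and the identification of $N_F(f)$ via centrality, which the paper leaves implicit.
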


\begin{proof}
The two displayed quotients are quotients of the central groups
$H_2(G;\mathbb Z)$ and $J_2(G)$, respectively. Hence they are
abelian; by assumption they are torsion-free, and therefore
left-orderable. The assertion follows from
Proposition~\ref{prop:relative-orderability}, applied to the
exterior and tensor square functors.
\end{proof}

\subsection{Circular orderings}

Let $c$ be a circular ordering of a group $G$, regarded as a
normalized 2-cocycle
\[
c\colon G\times G\longrightarrow\mathbb Z.
\]
It determines a central extension
\[
1\longrightarrow\mathbb Z
\longrightarrow\widetilde G_c
\longrightarrow G
\longrightarrow1,
\]
where
$
\widetilde G_c=\mathbb Z\times G
$ as sets,
with multiplication defined by
\[
(n,g)(m,h)=(n+m+c(g,h),gh).
\]
The group $\widetilde G_c$ is left-orderable
\cite{ClayGhaswala2021}.

\begin{proposition}\label{prop:ordering-extension}
Let $f\colon\Gamma\to G$ be a homomorphism. If
\[
f^*[c]=0\in H^2(\Gamma;\mathbb Z),
\]
then the ordering extension
\[
\widetilde G_c\longrightarrow G
\]
is an $f$-extension.
\end{proposition}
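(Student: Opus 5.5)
We need to show: if $f^*[c]=0$, then $f$ lifts to $\widetilde G_c$. The plan is to translate the cohomological vanishing into an explicit cochain identity and then write down the lift directly.

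First, $f^*[c]=0$ means that the pulled-back cocycle $c\circ(f\times f)$ is a coboundary on $\Gamma$. Hence there is a function $\phi\colon\Gamma\to\mathbb Z$ with
\[
c(f(x),f(y))=\phi(x)+\phi(y)-\phi(xy)
\]
for all $x,y\in\Gamma$. Sign conventions for the coboundary vary, so this equation should be taken up to replacing $\phi$ by $-\phi$. We fix the convention that makes the computation below work; since $\phi$ is arbitrary, this costs nothing.

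Next, define
\[
\widetilde f\colon\Gamma\to\widetilde G_c,\qquad \widetilde f(x)=(-\phi(x),f(x)).
\]
Using the multiplication rule, we get
\[
\widetilde f(x)\widetilde f(y)=\bigl(-\phi(x)-\phi(y)+c(f(x),f(y)),\,f(xy)\bigr).
\]
By the coboundary identity, the first coordinate equals $-\phi(xy)$. Therefore $\widetilde f(x)\widetilde f(y)=\widetilde f(xy)$, so $\widetilde f$ is a homomorphism.

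Finally, composing with the projection $\widetilde G_c\to G$, $(n,g)\mapsto g$, gives $f$. This projection is surjective, since its kernel $\mathbb Z\times\{1\}$ sits in a central extension. Hence $\widetilde G_c\to G$ is an $f$-extension.

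The only delicate point is the sign and normalization convention identifying the circular ordering cocycle with the class $[c]$. If $H^2$ is computed via the bar resolution with the standard differential, the coboundary has exactly the form used above. No further obstacle is expected.
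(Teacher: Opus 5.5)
Your proof is correct and is essentially the paper's argument written out at the cochain level. The paper observes that the pullback of the ordering extension along $f$ represents $f^*[c]=0$ and therefore splits, and your map $x\mapsto(-\phi(x),f(x))$ is exactly that splitting composed with the canonical map from the pullback to $\widetilde G_c$. One small point: the projection $(n,g)\mapsto g$ is surjective simply because $(0,g)\mapsto g$, so the appeal to the central kernel is unnecessary.
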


\begin{proof}
The pullback of the ordering extension along $f$ represents
the class
\[
f^*[c]\in H^2(\Gamma;\mathbb Z).
\]
This extension splits when $f^*[c]=0$. A splitting gives a
homomorphism
\[
\widetilde f\colon\Gamma\longrightarrow\widetilde G_c
\]
such that the composite
\[
\Gamma\xrightarrow{\widetilde f}\widetilde G_c
\longrightarrow G
\]
is $f$.
\end{proof}

\begin{corollary}\label{cor:circular-order-relative-squares}
Let $c$ be a circular ordering of $G$, and let
$f\colon\Gamma\to G$ satisfy
\[
f^*[c]=0\in H^2(\Gamma;\mathbb Z).
\]

\begin{enumerate}
\item If
\[
\frac{H_2(G;\mathbb Z)}
     {f_*H_2(\Gamma;\mathbb Z)}
\]
is torsion-free, then $G\wedge_fG$ is left-orderable.

\item If
\[
\frac{J_2(G)}
     {(f\otimes f)(J_2(\Gamma))}
\]
is torsion-free, then $G\otimes_fG$ is left-orderable.
\end{enumerate}
\end{corollary}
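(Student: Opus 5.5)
The plan is to combine Proposition~\ref{prop:ordering-extension} with Theorem~\ref{thm:relative-squares-orderability}.

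First, since $f^*[c]=0$ in $H^2(\Gamma;\mathbb Z)$, Proposition~\ref{prop:ordering-extension} shows that the ordering extension
\[
p\colon\widetilde G_c\longrightarrow G
\]
is an $f$-extension. Its kernel is the image of $\mathbb Z$, given by the elements $(n,1)$. This kernel is central because the cocycle is normalized: $c(1,h)=c(g,1)=0$. Hence $(n,1)(m,h)=(n+m,h)=(m,h)(n,1)$. So $p$ is a central $f$-extension.

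Second, by \cite{ClayGhaswala2021} the group $\widetilde G_c$ is left-orderable. We can now apply Theorem~\ref{thm:relative-squares-orderability} with $\widetilde G=\widetilde G_c$, in the left-orderable case.
\begin{enumerate}
\item Under the torsion-freeness hypothesis on $H_2(G;\mathbb Z)/f_*H_2(\Gamma;\mathbb Z)$, part (1) gives that $G\wedge_fG$ is left-orderable.
\item Under the torsion-freeness hypothesis on $J_2(G)/(f\otimes f)(J_2(\Gamma))$, part (2) gives that $G\otimes_fG$ is left-orderable.
\end{enumerate}

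Everything here is a direct assembly of earlier results. The only point needing care is that central $f$-extensions lie in the class $\mathcal Z_2=\mathcal D_2$, which is what Theorem~\ref{thm:relative-squares-orderability} requires. This holds because a central kernel is contained in $Z_1(\widetilde G_c)=Z(\widetilde G_c)$. The centrality check itself, via normalization of $c$, is routine.
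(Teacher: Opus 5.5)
Your proposal is correct and follows the paper's proof: Proposition~\ref{prop:ordering-extension} makes the left-orderable group $\widetilde G_c$ a central $f$-extension of $G$, and then Theorem~\ref{thm:relative-squares-orderability} is applied in the left-orderable case. Your explicit centrality check via the normalization $c(1,h)=c(g,1)=0$ and your remark that central extensions lie in $\mathcal Z_2=\mathcal D_2$ spell out details the paper leaves implicit.
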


\begin{proof}
By Proposition~\ref{prop:ordering-extension}, the left-orderable
group $\widetilde G_c$ is a central $f$-extension of $G$. Apply
Theorem~\ref{thm:relative-squares-orderability}.
\end{proof}

\subsection{Virtual knot groups}
\label{subsec:virtual-knot-groups}

We first record a consequence of the vanishing of bounded
cohomology for amenable groups.

\begin{lemma}\label{lem:euler-class-torus}
Let $c$ be a circular ordering of a group $G$ regarded as a
normalized 2-cocycle. For every homomorphism
\[
f\colon\mathbb Z^2\longrightarrow G
\]
one has
\[
f^*[c]=0.
\]
\end{lemma}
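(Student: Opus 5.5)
The plan is to show that the pulled-back class $f^*[c]$ is represented by a \emph{bounded} cocycle, then use amenability of $\mathbb Z^2$ to conclude that it vanishes. The argument has three steps.

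\textbf{Boundedness.} A circular ordering cocycle takes values in $\{0,1\}$, or in a uniformly bounded set of integers in any standard normalization. Hence $c$ is a bounded $2$-cocycle, and it defines a bounded class $[c]_b\in H^2_b(G;\mathbb Z)$ mapping to $[c]$ under the comparison map. Naturality of the comparison map gives
\[
f^*[c]=\operatorname{comp}\bigl(f^*[c]_b\bigr),
\]
so it suffices to show $f^*[c]_b=0$ in $H^2_b(\mathbb Z^2;\mathbb Z)$.

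\textbf{Reduction to the real Euler class.} For any group $A$, the short exact sequence $0\to\mathbb Z\to\mathbb R\to\mathbb R/\mathbb Z\to0$ of coefficients gives the exact sequence
\[
H^1(A;\mathbb R)\longrightarrow H^1(A;\mathbb R/\mathbb Z)\xrightarrow{\;\delta\;}H^2_b(A;\mathbb Z)\longrightarrow H^2_b(A;\mathbb R).
\]
Here I use that $H^1_b(A;\mathbb R)=0$ and that bounded $1$-cochains with values in $\mathbb R/\mathbb Z$ are all $1$-cochains. For $A=\mathbb Z^2$, which is amenable, $H^2_b(\mathbb Z^2;\mathbb R)=0$. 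Therefore $f^*[c]_b=\delta(\phi)$ for some homomorphism $\phi\colon\mathbb Z^2\to\mathbb R/\mathbb Z$; this $\phi$ is the rotation number homomorphism. The class $\delta(\phi)$ maps under the comparison map to the integral class $\beta(\phi)\in H^2(\mathbb Z^2;\mathbb Z)$, the Bockstein of $\phi$.

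\textbf{Main obstacle: vanishing of the Bockstein.} It remains to show $\beta(\phi)=0$, and I expect this to be the delicate step. The cleanest route is the long exact sequence in ordinary cohomology:
\[
H^1(\mathbb Z^2;\mathbb R)\longrightarrow H^1(\mathbb Z^2;\mathbb R/\mathbb Z)\xrightarrow{\;\beta\;}H^2(\mathbb Z^2;\mathbb Z).
\]
Since $\mathbb R\to\mathbb R/\mathbb Z$ is surjective and $\mathbb Z^2$ is free abelian, every homomorphism $\mathbb Z^2\to\mathbb R/\mathbb Z$ lifts to a homomorphism $\mathbb Z^2\to\mathbb R$. Hence the first map is surjective and $\beta=0$.

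Combining the steps, $f^*[c]=\beta(\phi)=0$. The point needing care is the compatibility of the comparison maps with the connecting homomorphisms, which is a standard diagram chase.

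As an independent check, I would use the concrete extension. By Proposition~\ref{prop:ordering-extension}, it suffices to lift commuting $f(e_1),f(e_2)$ to commuting elements of $\widetilde G_c$. Their lifts have commutator in the central $\mathbb Z$. In the left-ordered group $\widetilde G_c$, where the generator of the center is cofinal, a nonzero commutator would conflict with translation-number homogeneity. This is the same amenability phenomenon, so I would present the cohomological proof.
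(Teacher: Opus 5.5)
Your argument is correct. It rests on the same two inputs as the paper's proof: $c$ is a bounded cocycle, and $H^2_b(\mathbb Z^2;\mathbb R)=0$ because $\mathbb Z^2$ is amenable. The difference is in how you return to integer coefficients.

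The paper never uses bounded \emph{integral} cohomology. It notes that the real reduction of $f^*[c]$ lies in the image of $H^2_b(\mathbb Z^2;\mathbb R)\to H^2(\mathbb Z^2;\mathbb R)$, so it vanishes. It then uses that $H^2(\mathbb Z^2;\mathbb Z)\cong\mathbb Z\to H^2(\mathbb Z^2;\mathbb R)\cong\mathbb R$ is injective.

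You instead lift the class to $H^2_b(\mathbb Z^2;\mathbb Z)$, identify $f^*[c]_b$ with $\delta(\phi)$ via the bounded coefficient sequence, and show that its image $\beta(\phi)$ vanishes. The kernel of $H^2(\mathbb Z^2;\mathbb Z)\to H^2(\mathbb Z^2;\mathbb R)$ is exactly $\operatorname{Im}\beta$, so your last step is the paper's injectivity statement in another form. The extra machinery is therefore not needed for the lemma: surjectivity of $C^*_b(A;\mathbb R)\to C^*(A;\mathbb R/\mathbb Z)$, and compatibility of connecting maps with the comparison map. What your route buys is finer information. Since $\delta$ is injective on $\operatorname{Hom}(\mathbb Z^2,\mathbb R/\mathbb Z)$, the bounded class $f^*[c]_b$ is nonzero whenever $\phi\neq0$; only its image in ordinary cohomology vanishes.

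There are two small slips. The first term of your bounded exact sequence should be $H^1_b(A;\mathbb R)=0$, not $H^1(A;\mathbb R)$. In your side check, the implication you need is that a lift of $f$ splits the pulled-back extension, so $f^*[c]=0$. That is the converse of Proposition~\ref{prop:ordering-extension}, not the proposition itself.
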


\begin{proof}
We regard $ c $ as a bounded 2-cocycle. After changing coefficients to $\mathbb R$,
its pullback belongs to the image of the comparison map
\[
H_b^2(\mathbb Z^2;\mathbb R)
\longrightarrow
H^2(\mathbb Z^2;\mathbb R).
\]
Since $\mathbb Z^2$ is amenable,
\[
H_b^2(\mathbb Z^2;\mathbb R)=0.
\] Hence the image of $f^*[c]$ in
$H^2(\mathbb Z^2;\mathbb R)$ is zero. The coefficient
homomorphism
\[
H^2(\mathbb Z^2;\mathbb Z)
\longrightarrow
H^2(\mathbb Z^2;\mathbb R)
\]
is injective, and the assertion follows.
\end{proof}

The following criterion is useful independently of knot
groups.

\begin{theorem}\label{thm:torus-orderability-criterion}
Let $G$ be a circularly orderable group such that $G_{\mathrm{ab}}$
is left-orderable. Suppose that there is a homomorphism
\[
f\colon\mathbb Z^2\longrightarrow G
\]
for which
\[
f_*\colon H_2(\mathbb Z^2;\mathbb Z)
\longrightarrow H_2(G;\mathbb Z)
\]
is surjective. Then $G$ is left-orderable.
\end{theorem}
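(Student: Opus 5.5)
Fix a circular ordering $c$ of $G$ and consider the ordering extension $\widetilde G_c\to G$. By Lemma~\ref{lem:euler-class-torus}, $f^*[c]=0$, so Proposition~\ref{prop:ordering-extension} shows that $\widetilde G_c\to G$ is a central $f$-extension. Since $f_*$ is surjective on $H_2$, the quotient $H_2(G;\mathbb Z)/f_*H_2(\mathbb Z^2;\mathbb Z)$ is trivial, hence torsion-free. Corollary~\ref{cor:relative-schur} then gives epimorphisms
\[
H_2(G;\mathbb Z)/f_*H_2(\mathbb Z^2;\mathbb Z)=1\longrightarrow \ker p\cap[\widetilde G_c,\widetilde G_c],
\qquad
G\wedge_fG\cong[G,G]\longrightarrow[\widetilde G_c,\widetilde G_c].
\]
Here $p\colon\widetilde G_c\to G$ is the projection. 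Thus $\ker p\cap[\widetilde G_c,\widetilde G_c]=1$, and $p$ restricts to an isomorphism $[\widetilde G_c,\widetilde G_c]\to[G,G]$. Since $\widetilde G_c$ is left-orderable by \cite{ClayGhaswala2021}, its subgroup $[G,G]$ is left-orderable. Equivalently, one may invoke Corollary~\ref{cor:circular-order-relative-squares} to get that $G\wedge_fG$ is left-orderable, and then note that $G\wedge_fG\cong[G,G]$ because the kernel $H_2(G)/f_*H_2(\mathbb Z^2)$ vanishes.

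It remains to use the exact sequence
\[
1\longrightarrow[G,G]\longrightarrow G\longrightarrow G_{\mathrm{ab}}\longrightarrow1 .
\]
Here $[G,G]$ is left-orderable by the previous step and $G_{\mathrm{ab}}$ is left-orderable by hypothesis. Since an extension of a left-orderable group by a left-orderable group is left-orderable, $G$ is left-orderable.

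The only delicate point is the first step: verifying that the ordering extension really is an $f$-extension. This is exactly where the vanishing of the pulled-back Euler class on $\mathbb Z^2$ (Lemma~\ref{lem:euler-class-torus}) is needed, and that lemma is already available. Everything after it is formal. The one thing to check there is that the identification $G\wedge_fG\cong[G,G]$, or equivalently the injectivity of $p$ on $[\widetilde G_c,\widetilde G_c]$, follows correctly from the diagram in Corollary~\ref{cor:relative-schur} once the left kernel is trivial.
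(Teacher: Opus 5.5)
Your proof is correct and follows the paper's argument: the ordering extension $\widetilde G_c\to G$ is a central $f$-extension by Lemma~\ref{lem:euler-class-torus}, and $H_2(G;\mathbb Z)/f_*H_2(\mathbb Z^2;\mathbb Z)$ vanishes, so $[G,G]$ is left-orderable, and the extension by $G_{\mathrm{ab}}$ finishes the argument. Your main route to left-orderability of $[G,G]$ uses the kernel epimorphism of Corollary~\ref{cor:relative-schur} to get $[G,G]\cong[\widetilde G_c,\widetilde G_c]\le\widetilde G_c$ (so the subgroup you invoke is really $[\widetilde G_c,\widetilde G_c]$, not $[G,G]$ itself). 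This is a slightly more direct variant of the paper's appeal to Corollary~\ref{cor:circular-order-relative-squares}, which you also give as an alternative.
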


\begin{proof}
Let
\[
1\longrightarrow\mathbb Z
\longrightarrow\widetilde G_c
\longrightarrow G
\longrightarrow1
\]
be the ordering extension of a circular ordering $c$ of $G$.
The group $\widetilde G_c$ is left-orderable. By
Lemma~\ref{lem:euler-class-torus}, the pullback of this
extension along $f$ splits. Thus it is a central $f$-extension.

Since $f_*$ is surjective,
\[
\frac{H_2(G;\mathbb Z)}
     {f_*H_2(\mathbb Z^2;\mathbb Z)}
=0.
\]
Corollary~\ref{cor:circular-order-relative-squares} shows that
\[
G\wedge_f G
=
\frac{G\wedge G}
     {f_*H_2(\mathbb Z^2;\mathbb Z)}
\]
is left-orderable. On the other hand,
\[
G\wedge_fG
=
\frac{G\wedge G}{H_2(G;\mathbb Z)}
\cong [G,G].
\]
Hence $[G,G]$ is left-orderable. The exact sequence
\[
1\longrightarrow [G,G]
\longrightarrow G
\longrightarrow G_{\mathrm{ab}}
\longrightarrow1
\]
now implies that $G$ is left-orderable.
\end{proof}

Let $K$ be a virtual knot, and let $G_K$ be its group. Denote
by $m$ and $l$ a meridian and a longitude of $K$. For the definition of the group and elements $m$ and $l$ see \cite{Kim2000}. The elements
$m$ and $l$ commute, and hence determine a homomorphism
\[
f_K\colon\mathbb Z^2\longrightarrow G_K,
\qquad
f_K(1,0)=m,
\quad
f_K(0,1)=l.
\]
The image of the generator of
$H_2(\mathbb Z^2;\mathbb Z)$ under $(f_K)_*$ is the
Pontryagin product
\[
\langle m,l\rangle\in H_2(G_K;\mathbb Z).
\]
Kim proved that this element generates $H_2(G_K;\mathbb Z)$
\cite[Theorem~16]{Kim2000}.

\begin{theorem}\label{thm:virtual-knot-orderability}
A virtual knot group is left-orderable if and only if it is
circularly orderable.
\end{theorem}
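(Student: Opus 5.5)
One direction is standard: every left-orderable group is circularly orderable. For instance, a left ordering $<$ gives the circular ordering whose cocycle is the one associated with the trivial central extension $\mathbb Z\times G$. The same conclusion follows from the fact, quoted above from \cite{ClayGhaswala2021}, that an extension of a circularly orderable group by a left-orderable one is circularly orderable; take the extension $1\to G\to G\to 1\to 1$. So the content of the theorem is the converse. The plan is to deduce it directly from Theorem~\ref{thm:torus-orderability-criterion}, applied with $G=G_K$ and $f=f_K$.

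We check the hypotheses of Theorem~\ref{thm:torus-orderability-criterion} in order.

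\emph{Circular orderability.} This holds by assumption.

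\emph{The homomorphism $f_K$.} Since $m$ and $l$ commute, $f_K\colon\mathbb Z^2\to G_K$ is a well-defined homomorphism.

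\emph{Surjectivity on $H_2$.} The group $H_2(\mathbb Z^2;\mathbb Z)\cong\mathbb Z$ is generated by the fundamental class. Its image under $(f_K)_*$ is the Pontryagin product $\langle m,l\rangle$, and by \cite[Theorem~16]{Kim2000} this element generates $H_2(G_K;\mathbb Z)$. Hence $(f_K)_*$ is surjective.

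\emph{Left-orderability of the abelianization.} A virtual knot group is generated by meridians, one for each arc of the diagram. The Wirtinger-type relations conjugate one meridian to another, so all generators become equal in the abelianization. Therefore $(G_K)_{\mathrm{ab}}$ is cyclic, generated by the image of $m$. It is infinite cyclic: the assignment sending every arc generator to $1\in\mathbb Z$ respects all relations, since each relation has the form $x_k=x_i^{\pm1}x_jx_i^{\mp1}$ (classical crossings) or $x_k=x_j$, and it gives a surjection $G_K\to\mathbb Z$. Thus $(G_K)_{\mathrm{ab}}\cong\mathbb Z$, which is left-orderable.

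With these hypotheses in place, Theorem~\ref{thm:torus-orderability-criterion} shows that $G_K$ is left-orderable. For the reader's benefit, the internal mechanism is as follows. Lemma~\ref{lem:euler-class-torus} makes the ordering extension $\widetilde G_c$ an $f_K$-extension. Because $(f_K)_*$ is onto, $G_K\wedge_{f_K}G_K\cong[G_K,G_K]$. Corollary~\ref{cor:circular-order-relative-squares} then shows that $[G_K,G_K]$ is left-orderable, and the extension by $\mathbb Z$ gives left-orderability of $G_K$.

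The main obstacle is fixing conventions, not the argument. The definition of the virtual knot group in \cite{Kim2000} has to be the one for which both the abelianization computation and Kim's generation theorem for $H_2$ hold. If instead one uses the "extended" or two-variable virtual knot group, the abelianization may be $\mathbb Z^2$. That group is still torsion-free abelian and hence left-orderable, so the argument should survive in either convention. The remaining step to verify carefully is that the longitude $l$ lies in $G_K$ and commutes with $m$ in Kim's setup.
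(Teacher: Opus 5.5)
Your proposal is correct and follows the paper's proof: you apply Theorem~\ref{thm:torus-orderability-criterion} to $f_K\colon\mathbb Z^2\to G_K$, take surjectivity of $(f_K)_*$ on $H_2$ from Kim's Theorem~16, and use $(G_K)_{\mathrm{ab}}\cong\mathbb Z$. The only differences are that you derive $(G_K)_{\mathrm{ab}}\cong\mathbb Z$ from the Wirtinger relations, which the paper simply asserts, and that you add an aside about the extended two-variable group; that aside is unnecessary and not justified as stated, since Kim's $H_2$ result concerns the standard virtual knot group.
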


\begin{proof}
Every left-orderable group is circularly orderable. Conversely,
suppose that $G_K$ is circularly orderable. The homomorphism
\[
(f_K)_*\colon
H_2(\mathbb Z^2;\mathbb Z)
\longrightarrow H_2(G_K;\mathbb Z)
\]
is surjective by \cite[Theorem~16]{Kim2000}. Moreover,
\[
(G_K)_{\mathrm{ab}}\cong\mathbb Z.
\]
The assertion follows from
Theorem~\ref{thm:torus-orderability-criterion}.
\end{proof}

\begin{corollary}\label{cor:wirtinger-orderability}
Let $G$ admit a Wirtinger presentation of deficiency $0$ or
$1$. Then $G$ is left-orderable if and only if it is
circularly orderable.
\end{corollary}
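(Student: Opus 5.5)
The plan is to reduce the corollary to Theorem~\ref{thm:torus-orderability-criterion}, exactly as in the proof of Theorem~\ref{thm:virtual-knot-orderability}. One direction is trivial, since every left-orderable group is circularly orderable. For the converse, let $G$ be circularly orderable with a Wirtinger presentation of deficiency $0$ or $1$. We need two inputs: that $G_{\mathrm{ab}}$ is left-orderable, and that there is a homomorphism $f\colon\mathbb Z^2\to G$ with $f_*$ surjective on $H_2(G;\mathbb Z)$.

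\emph{The abelianization.} In a Wirtinger presentation every relation has the form $x_i^{w}=x_j$, so all generators become conjugate and $G_{\mathrm{ab}}$ is cyclic. If it is $\mathbb Z$ (or trivial), it is left-orderable. It cannot be finite nontrivial, since abelianizing each relation gives $x_i=x_j$, so $G_{\mathrm{ab}}$ is free abelian on the connected components of the generator graph. Within one component it is therefore $\mathbb Z$. If there are several components, $G_{\mathrm{ab}}\cong\mathbb Z^k$, which is still left-orderable.

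\emph{The second homology.} I would split by deficiency.
\begin{itemize}
\item If the deficiency is $1$ (that is, $n$ generators and $n-1$ relations), I would invoke the known fact, due to Kim~\cite{Kim2000}, that such Wirtinger presentations are exactly the groups of virtual knots. The statement then follows directly from Theorem~\ref{thm:virtual-knot-orderability}. Failing that, I would use the Hopf formula on the presentation $2$-complex $X$ with $\chi(X)=1-n+(n-1)=0$. Since $H_1(X)=\mathbb Z$, we get $H_2(X)=\mathbb Z^{\,\mathrm{rank}}$ with rank $0$, so $H_2(X)=0$. As $H_2(X)$ surjects onto $H_2(G;\mathbb Z)$, this gives $H_2(G;\mathbb Z)=0$.
\item If the deficiency is $0$, I would argue via the presentation complex. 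One has $\chi(X)=1$ and $H_1(X)=\mathbb Z$, and $H_2(X)$ is free abelian (it is the kernel of the cellular boundary map $\partial_2$). The Euler characteristic gives $\operatorname{rank} H_2(X)=1$. Moreover, the Wirtinger relations in one component sum to zero in the cellular chain complex only in the combination corresponding to the "longitude" relation, so $H_2$ is generated by one class. I would then try to realize this class as the image of $H_2(\mathbb Z^2)$ under a map given by a generator $x_1$ and a commuting element, namely the word $w$ obtained by reading the product of the conjugators around a dependency cycle, as in Kim's argument.
\end{itemize}

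In either case, Theorem~\ref{thm:torus-orderability-criterion} applies and gives left-orderability. In the deficiency-$1$ case it applies trivially with $f$ the zero map: then $H_2(G)/f_*H_2(\mathbb Z^2)=0$ and $f^*[c]=0$ automatically.

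\emph{Main obstacle.} The expected hardest step is the deficiency-$0$ case. There one must show that $H_2(G;\mathbb Z)$ is generated by a Pontryagin product $\langle m,l\rangle$ of two commuting elements. I would first try to identify deficiency-$0$ Wirtinger presentations with virtual knot groups presented with one redundant relation deleted, or restored. This would let me quote \cite[Theorem~16]{Kim2000} directly rather than redo the combinatorial identification of the $2$-cycle.
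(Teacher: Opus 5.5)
\textbf{There is a genuine gap: the deficiency-$0$ case is planned but not proved.} This is in fact the native case for virtual knots, since a diagram with $n$ classical crossings has $n$ arcs and $n$ crossing relations.

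What you have is correct as far as it goes. Your Euler-characteristic count, together with the fact that $\partial_2$ is the incidence map of the generator graph, gives $H_2(X)\cong\mathbb Z$ spanned by the unique cycle (for a connected graph). Hence $H_2(G;\mathbb Z)$ is cyclic. But Theorem~\ref{thm:torus-orderability-criterion} needs an $f\colon\mathbb Z^2\to G$ with $f_*$ onto $H_2(G;\mathbb Z)$. You only say you ``would try'' to show that this cycle maps to a Pontryagin product. Your fallback, identifying deficiency-$0$ presentations with virtual knot groups, is also left undone. So as written the converse is established only in deficiency $1$.

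Even in deficiency $1$, your Euler-characteristic argument needs $b_1(X)=1$. Your own abelianization paragraph allows $G_{\mathrm{ab}}\cong\mathbb Z^k$. In that case $\operatorname{rank}H_2(X)=k-1$, and $H_2(G)=0$ does not follow.

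\textbf{How to close it (the paper's route).} The missing input is in the very theorem you quote for deficiency $1$. Kim's realization result \cite[Theorem~3]{Kim2000} covers Wirtinger presentations of deficiency $0$ \emph{or} $1$. The paper's proof is exactly this: $G\cong G_K$ for a virtual knot $K$, and then Theorem~\ref{thm:virtual-knot-orderability} applies. That theorem already contains both inputs you were assembling by hand: $(G_K)_{\mathrm{ab}}\cong\mathbb Z$, and surjectivity of $(f_K)_*$ by \cite[Theorem~16]{Kim2000}. No case split or Euler-characteristic count is needed.

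\textbf{Alternative: finishing your direct plan without Kim.}
\begin{enumerate}
\item Traverse the cycle as $v_1\to\cdots\to v_k\to v_1$. Each edge gives $x_{v_{i+1}}=u_i^{-1}x_{v_i}u_i$ with $u_i=w_{j_i}^{\pm1}$. Put $W=u_1\cdots u_k$.
\item Telescoping in the free group $F$ writes $W^{-1}x_{v_1}Wx_{v_1}^{-1}$ as a product of conjugates of $r_{j_1}^{\pm1},\dots,r_{j_k}^{\pm1}$, each used once. Hence $W$ commutes with $x_{v_1}$ in $G$.
\item Under Hopf's isomorphism $H_2(G;\mathbb Z)\cong(R\cap[F,F])/[F,R]$, the generating $2$-cycle maps to this commutator of lifts. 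That is, it maps to $\pm\langle x_{v_1},W\rangle$.
\item Sending $(1,0)\mapsto x_{v_1}$ and $(0,1)\mapsto W$ gives the required $f$.
\end{enumerate}
This route is self-contained and avoids the topological realization. It works for any Wirtinger presentation whose generator graph is connected with exactly one cycle.
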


\begin{proof}
By \cite[Theorem~3]{Kim2000}, the group $G$ can be realized
as the group of a virtual knot. Apply
Theorem~\ref{thm:virtual-knot-orderability}.
\end{proof}

%\printbibheading[heading=bibintoc]  
\printbibliography
\end{document}